\numberwithin{equation}{section}
\theoremstyle{plain}
\newtheorem{theorem}{Theorem}[section]
\newtheorem{lemma}[theorem]{Lemma}
\newtheorem{proposition}[theorem]{Proposition}
\newtheorem{conjecture}{Conjecture}[section]
\newtheorem{question}{Question}[section]
\def\A{\operatorname{A}}
\def\B{\operatorname{B}}
\def\C{\operatorname{C}}
\def\BC{\operatorname{BC}}
\def\D{\operatorname{D}}
\def\G{\operatorname{G}}
\def\SO{\operatorname{SO}}
\def\Sp{\operatorname{Sp}}
\def\SU{\operatorname{SU}}
\def\U{\operatorname{U}}
\def\Ad{\operatorname{Ad}}
\def\Aut{\operatorname{Aut}}
\def\der{\operatorname{der}}
\def\det{\operatorname{det}}
\def\Hom{\operatorname{Hom}}
\def\id{\operatorname{id}}
\def\Ind{\operatorname{Ind}}
\def\ker{\operatorname{ker}}
\def\Lie{\operatorname{Lie}}
\def\max{\operatorname{max}}
\def\sgn{\operatorname{sgn}}
\def\span{\operatorname{span}}
\newcommand{\bbZ}{\mathbb{Z}}
\newcommand{\bbQ}{\mathbb{Q}}
\newcommand{\frt}{\mathfrak{t}}
\begin{document}

\title{Some new results on dimension datum}
%\date{May 2020}
\thanks{}

\author{Jun Yu}
\address{Beijing International Center for Mathematical Research, Peking University, No. 5 Yiheyuan Road, Beijing 100871, China.}
\email{junyu@bicmr.pku.edu.cn}

\keywords{Dimension datum, $\tau$-dimension datum, hermitian vector bundle, normal homogeneous space, isospectrality.}
\subjclass[2010]{22E46, 58J53.}
\begin{abstract}
In this paper we show three new results concerning dimension datum. Firstly, for two subgroups $H_{1}$($\cong
\U(2n+1)$) and $H_{2}$($\cong\Sp(n)\times\SO(2n+2)$) of $\SU(4n+2)$, we find a family of pairs of irreducible representations $(\tau_1,\tau_2)\in\hat{H_{1}}\times\hat{H_{2}}$ such that $\mathscr{D}_{H_1,\tau_1}=\mathscr{D}_{H_2,\tau_2}$. With this
we construct examples of isospectral hermitian vector bundles. Secondly, we show that: $\tau$-dimension data of one-dimensional
representations of a connected compact Lie group $H$ determine the image of homomorphism from $H$ to a given compact Lie group
$G$. Lastly, we improve a compactness result for an isospectral set of normal homogeneous spaces $(G/H,m)$ by allowing the
Riemannian metric $m$ vary, but posing a constraint that $G$ is semisimple.
\end{abstract}

\maketitle

\setcounter{tocdepth}{1}

\tableofcontents

\section{Introduction}

Let $G$ be a compact Lie group. Write $\hat{G}$ for the set of isomorphism classes of irreducible complex linear representations
of $G$, which is a countable set. The {\it dimension datum} of a closed subgroup $H$ is defined by \[\mathscr{D}_{H}: \hat{G}
\rightarrow\mathbb{Z},\quad\rho\mapsto\dim\rho^{H}.\] The dimension datum was first studied by Larsen and Pink in their pioneering
work \cite{Larsen-Pink}, with the motivation of helping determine monodromy groups of $\ell$-adic Galois representations. In the
beginning of the 21st century, Langlands launched a program of ``beyond endoscopy", where he used dimension datum as a key
ingredient in his stable trace formula approach to showing general functoriality (\cite{Langlands}, \cite{Arthur}). Since then
dimension datum catches more attention in the mathematical community. Besides number theory and automorphic form theory, dimension
datum also has applications in differential geometry. For example, it is used to construct the first non-diffeomorphic isospectral
simply-connected closed Riemannian manifolds (\cite{An-Yu-Yu}), which is based on the generalized Sunada's method (\cite{Sunada},
\cite{Pesce}, \cite{Sutton}). In \cite{Yu-dimension}, we classified connected closed subgroups of a given compact Lie group with
the same dimension datum, and characterized linear relations among distinct dimension data. In \cite{Yu-compactness} we showed
that the space of dimension data of closed subgroups in a given compact Lie group is compact.

In this paper, we show several new results concerning dimension datum after previous works \cite{Larsen-Pink}, \cite{An-Yu-Yu},
\cite{Yu-dimension}, \cite{Yu-compactness}. Let $\tau$ be an irreducible representation of $H$, define \[\mathscr{D}_{H,\tau}:
\hat{G}\rightarrow\mathbb{Z},\quad\rho\mapsto\dim\Hom_{H}(\tau,\rho|_{H}),\] and call it the {\it $\tau$-dimension datum} of $H$.
Like for dimension datum, one could again ask about equalities and linear relationes among $\tau$-dimension data. In Section 2
we reduce this to the study of characters associated to sub-root systems and weights. Generalizing the treatment in \cite{An-Yu-Yu}
and \cite{Yu-dimension}, for two subgroups $H_{1}$($\cong\U(2n+1)$) and $H_{2}$($\cong\Sp(n)\times\SO(2n+2)$) of $\SU(4n+2)$, we
find a family of pairs of irreducible representations $(\tau_1,\tau_2)\in\hat{H_{1}}\times\hat{H_{2}}$ such that  $\mathscr{D}_{H_1,\tau_1}=\mathscr{D}_{H_2,\tau_2}$. This
enables us to construct examples of isospectral hermitian vector bundles, which is a generalization of examples of isospectral
manifolds found in \cite{An-Yu-Yu}. In Section 3, we show that: $\tau$-dimension data of one-dimensional representations of a
connected compact Lie group $H$ determine the image of homomorphism from $H$ to a given compact Lie group $G$. This result is a
generalization of the main theorem of \cite{Larsen-Pink} by removing the semisimplicity constraint. In Section 4, we improve a
compactness result for an isospectral set of normal homogeneous spaces $(G/H,m)$ by allowing the Riemannian metric $m$ vary, but
posing a constraint that $G$ is semisimple. We also pose a conjecture concerning an isospectral set of normal homogeneous spaces.

\smallskip

\noindent{\it Acknowledgements.} I would like to thank Jiu-kang Yu and Jinpeng An for helpful communications in the early stage of
this work, and to thank Emilio Lauret for detailed comments and corrections on an early draft of this paper. Thanks to Professor
Richard Taylor for asking me a question which motivated Theorem \ref{T:Taylor}. This research is partially supported by the NSFC Grant
11971036.

\section{The $\tau$-dimension datum of a connected subgroup}

\subsection{Root system and character}\label{SS:RS}

Let $T$ be a torus in $G$. Write $X^{\ast}(T)$ for the weight lattice of $T$. Write $$\Gamma^{\circ}=N_{G}(T)/Z_{G}(T).$$ Choose a
biinvariant Riemannian metric on $G$. Restricting to $T$ it gives a positive definite inner product on the Lie algebra $\frt_0$ of
$T$. Dually, it induces a positive definite inner product on the dual space $\frt_0^{\ast}$. We have $X^{\ast}(T)\subset\mathbf{i}
\frt_{0}^{\ast}$. Multiplying by $-1$ and by restriction, it gives a positive definite inner product on $X^{\ast}(T)$, denoted by
$(\cdot,\cdot)$, which is necessarily $\Gamma^{0}$ invariant.

As in \cite[Def. 2.2]{Yu-dimension}, a root system in the lattice $X^{\ast}(T)$ is a finite subset $\Phi$ satisfying
the following conditions:
\begin{enumerate}
\item[(i)] For any two roots $\alpha\in\Phi$ and $\beta\in\Phi$, the element $\beta-\frac{2(\beta,\alpha)}{(\alpha,\alpha)}\alpha
\in\Phi$.
\item[(ii)] (\textbf{Strong integrality}) For any root $\alpha\in\Phi$ and any weight $\lambda\in X^{\ast}(T)$, the number
$\frac{2(\lambda,\alpha)}{(\alpha,\alpha)}$ is an integer.
\end{enumerate}
As in \cite[Def. 3.1]{Yu-dimension}, set \[\Psi_{T}=\big\{0\neq\alpha\in X^{\ast}(T): \frac{2(\lambda,\alpha)}{(\alpha,\alpha)}
\in\bbZ,\ \forall\lambda\in X^{\ast}(T)\big\}.\] Define $\Psi'_{T}$ as the intersection of sub-root systems of $\Psi_{T}$ which
contain all root systems $R(H,T)$ where $H$ runs through connected closed subgroups $H$ of $G$ with $T$ a maximal torus of $H$.
Defined as above, $\Psi_{T}$ is itself a root system in the lattice $X^{\ast}(T)$, and it contains all root systems in the lattice
$X^{\ast}(T)$; $\Psi'_{T}$ is also a root system in the lattice $X^{\ast}(T)$. Both $\Psi_{T}$ and $\Psi'_{T}$ are necessarily
$\Gamma^{0}$ stable. The following proposition summarizes Prop. 3.3 and Cor. 3.4 in \cite{Yu-dimension}.

\begin{proposition}\label{P:Psi'}
We have $W_{\Psi'_{T}}\subset\Gamma^{0}$, and $\Psi'_{T}$ equals to the union of root systems $R(H,T)$ where $H$ runs over
closed connected subgroups $H$ of $G$ with $T$ a maximal torus of $H$.
\end{proposition}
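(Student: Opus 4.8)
The plan is to prove both assertions at once by showing directly that the set $\Phi := \bigcup_{H} R(H,T)$, where $H$ ranges over the connected closed subgroups of $G$ having $T$ as a maximal torus, is itself a root system in the lattice $X^{\ast}(T)$ contained in $\Psi_{T}$ and with $W_{\Phi} \subset \Gamma^{0}$. Granting this, $\Phi$ is the minimum element of the family of sub-root systems defining $\Psi'_{T}$, whence $\Psi'_{T} = \Phi = \bigcup_{H} R(H,T)$, which is the second assertion, and $W_{\Psi'_{T}} = W_{\Phi} \subset \Gamma^{0}$ is the first.

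First I would record two facts about a single $H$ with $T$ a maximal torus. Since $T$ is maximal in the connected compact group $H$, one has $Z_{H}(T) = T$ and $W_{R(H,T)} = N_{H}(T)/T$; the natural map $N_{H}(T)/T \to N_{G}(T)/Z_{G}(T) = \Gamma^{0}$ is injective, because $n \in N_{H}(T) \cap Z_{G}(T)$ forces $n \in Z_{H}(T) = T$. Hence every reflection $s_{\alpha}$, $\alpha \in R(H,T)$, is realized by an element of $\Gamma^{0}$ acting on $X^{\ast}(T)$, and in particular $W_{R(H,T)} \subset \Gamma^{0}$. Secondly, for $\alpha \in R(H,T)$ the coroot $\alpha^{\vee} \in X_{\ast}(T)$ pairs integrally with all of $X^{\ast}(T)$, and since the inner product is $\Gamma^{0}$-invariant one has $\langle \lambda, \alpha^{\vee} \rangle = \tfrac{2(\lambda,\alpha)}{(\alpha,\alpha)}$ for all $\lambda$; thus $\alpha \in \Psi_{T}$, so $R(H,T) \subset \Psi_{T}$ and therefore $\Phi \subset \Psi_{T}$. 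In particular condition (ii) for $\Phi$ is inherited from the definition of $\Psi_{T}$.

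Next I would check that $\Gamma^{0}$ stabilizes $\Phi$. If $g \in N_{G}(T)$ has image $\gamma \in \Gamma^{0}$, then conjugation by $g$ carries a subgroup $H$ with maximal torus $T$ to $gHg^{-1}$, which again has $T = gTg^{-1}$ as a maximal torus, and $\gamma \cdot R(H,T) = R(gHg^{-1},T) \subset \Phi$; as $\gamma$ is invertible, $\gamma \cdot \Phi = \Phi$. Axiom (i) for $\Phi$ now follows: given $\alpha, \beta \in \Phi$, choose $H$ with $\alpha \in R(H,T)$, so that $s_{\alpha} \in \Gamma^{0}$ by the previous step and $s_{\alpha}(\beta) = \beta - \tfrac{2(\beta,\alpha)}{(\alpha,\alpha)}\alpha \in \gamma \cdot \Phi = \Phi$. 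Hence $\Phi$ is a root system in $X^{\ast}(T)$, and its Weyl group $W_{\Phi}$, being generated by the $s_{\alpha}$ with $\alpha \in \Phi$, lies in $\Gamma^{0}$.

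It remains to identify $\Psi'_{T}$ with $\Phi$. By definition $\Psi'_{T}$ is the intersection of the sub-root systems of $\Psi_{T}$ containing every $R(H,T)$, equivalently containing $\Phi$; each such system contains $\Phi$, while $\Phi$ is itself one of them by the preceding paragraph, so $\Phi$ is the minimum of the family and $\Psi'_{T} = \Phi$. The one step needing genuine care is axiom (i): it is precisely where one must use both that $s_{\alpha}$ is realized inside $\Gamma^{0}$ (via maximality of $T$ in $H$) and that $\Gamma^{0}$ permutes the subgroups $H$, so that $s_{\alpha}(\beta)$ returns to the union $\Phi$ rather than merely to $\Psi_{T}$.
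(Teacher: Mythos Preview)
Your argument is correct. The paper itself does not supply a proof of this proposition: it simply records that the statement summarizes Prop.~3.3 and Cor.~3.4 of \cite{Yu-dimension}. Your direct proof is self-contained and sound; the key step, showing that the union $\Phi=\bigcup_{H}R(H,T)$ is already closed under the reflections $s_{\alpha}$ because each $s_{\alpha}$ lies in $\Gamma^{0}$ and $\Gamma^{0}$ permutes the contributing subgroups $H$, is exactly the mechanism behind the cited results. One small point you leave implicit is the finiteness of $\Phi$, but this is immediate from $\Phi\subset\Psi_{T}$ together with the paper's standing assertion that $\Psi_{T}$ is itself a (finite) root system.
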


Choose a positive system $\Psi_{T}^{+}$ of $\Psi_{T}$. For a root system $\Phi$ in the lattice $X^{\ast}(T)$, set \[\delta_{\Phi}
=\frac{1}{2}\sum_{\alpha\in\Phi\cap\Psi_{T}^{+}}\alpha.\] For a root system $\Phi$ in the lattice $X^{\ast}(T)$ and a weight
$\lambda\in X^{\ast}(T)$, set \[A_{\Phi,\lambda}=\sum_{w\in  W_{\Phi}}\sgn(w)[\lambda+\delta_{\Phi}-w\delta_{\Phi}]\in
\bbQ[X^{\ast}(T)].\] For a finite group $W$ between $W_{\Phi}$ and $\Gamma^{\circ}$, set \[F_{\Phi,\lambda,W}=\frac{1}{|W|}
\sum_{\gamma\in W}\gamma(A_{\Phi,\lambda})\in\bbQ[X^{\ast}(T)].\] For a weight $\lambda\in X^{\ast}(T)$ and a finite subgroup
$W$ of $\Gamma^{\circ}$, set \[\chi^{\ast}_{\lambda,W}=\frac{1}{|W|}\sum_{\gamma\in W}[\gamma\lambda]\in\bbQ[X^{\ast}(T)].\]
Then, \[F_{\Phi,\lambda,W}=\sum_{w\in  W_{\Phi}}\sgn(w)\chi_{\lambda+\delta_{\Phi}-w\delta_{\Phi},W}^{\ast}.\] Note that $\chi^{\ast}_{\lambda,W}=\chi^{\ast}_{\lambda',W}$ if and only of $W\lambda=W\lambda'$. Choose a set $\Lambda'$ of representatives
of $W$ orbits in $X^{\ast}(T)$. Then, $\{\chi^{\ast}_{\lambda,W}:\lambda\in\Lambda'\}$ is a basis of $\bbQ[X^{\ast}(T)]^{W}$, the
subspace of $W$ invariant characters on $T$.

\begin{proposition}\label{P:tau-character}
Let $\tau_1\in\widehat{H_1}$ and $\tau_{2}\in\widehat{H_2}$. If $\mathscr{D}_{H_1,\tau_1}=\mathscr{D}_{H_2,\tau_2}$, then $H_1$
and $H_2$ have conjugate maximal tori. Assume that $T$ is a maximal torus of both $H_1$ and $H_2$, write $\Phi_{i}$($\subset
X^{\ast}(T)$) for the root system of $H_{i}$ ($i=1,2$). Then, $$\mathscr{D}_{H_1,\tau_1}=\mathscr{D}_{H_2,\tau_2}\Leftrightarrow F_{\Phi_1,\lambda_1,\Gamma^{\circ}}=F_{\Phi_2,\lambda_2,\Gamma^{\circ}},$$ where $\lambda_{i}$($\in X^{\ast}(T)$) is highest weight
of $\tau_{i}$ ($i=1,2$).
\end{proposition}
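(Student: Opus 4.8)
The plan is to split the statement into its two assertions: that equality of the $\tau$-dimension data already forces $H_1$ and $H_2$ to have conjugate maximal tori, and then, once a common maximal torus $T$ has been fixed, the character-theoretic criterion. For the torus assertion I would argue on the space $\operatorname{Conj}(G)$ of conjugacy classes, following Larsen--Pink. Attach to $(H,\tau)$ the pushforward $\nu_{H,\tau}$ to $\operatorname{Conj}(G)$ of the complex measure $\overline{\chi_{\tau}(h)}\,dh$ on $H$. Since $\chi_{\rho}$ is a class function, $\int\chi_{\rho}\,d\nu_{H,\tau}=\int_{H}\chi_{\rho}\,\overline{\chi_{\tau}}\,dh=\mathscr{D}_{H,\tau}(\rho)$ for every $\rho\in\hat{G}$, and by Peter--Weyl the $\chi_{\rho}$ are dense in the continuous class functions, so $\mathscr{D}_{H,\tau}$ determines $\nu_{H,\tau}$. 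The closed support of $\nu_{H,\tau}$ is the image in $\operatorname{Conj}(G)$ of a maximal torus $T_{H}$ of $H$; as $\overline{\chi_{\tau}}$ is a nonzero real-analytic class function on $H$, the symmetrized density it produces vanishes only on a nowhere dense set and hence does not shrink this support. The image of $T_{H}$ then pins down the $G$-conjugacy class of $T_{H}$, so $\mathscr{D}_{H_1,\tau_1}=\mathscr{D}_{H_2,\tau_2}$ forces conjugate maximal tori. After conjugating I take $T$ to be a common maximal torus, with $W_{\Phi_i}=W(H_i,T)$.

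The computational core is to rewrite $\mathscr{D}_{H,\tau}(\rho)=\int_{H}\chi_{\rho}\,\overline{\chi_{\tau}}\,dh$ by the Weyl integration formula on $H$ as $\frac{1}{|W_{\Phi}|}\int_{T}\chi_{\rho}\,\overline{\chi_{\tau}}\,|D_{H}|^{2}\,dt$, where $D_{H}=\sum_{w\in W_{\Phi}}\sgn(w)[w\delta_{\Phi}]$ is the Weyl denominator. The key identity is $\chi_{\tau}\,|D_{H}|^{2}=\sum_{w\in W_{\Phi}}w(A_{\Phi,\lambda})=|W_{\Phi}|\,F_{\Phi,\lambda,W_{\Phi}}$: by the Weyl character formula $\chi_{\tau}D_{H}=\sum_{u\in W_{\Phi}}\sgn(u)[u(\lambda+\delta_{\Phi})]$, and multiplying by $\overline{D_{H}}=\sum_{v}\sgn(v)[-v\delta_{\Phi}]$ and reindexing $v=us$ collapses the double sum to $\sum_{u}u(A_{\Phi,\lambda})$. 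Since $|D_{H}|^{2}$ is real, this gives $\mathscr{D}_{H,\tau}(\rho)=\langle\chi_{\rho}|_{T},\,F_{\Phi,\lambda,W_{\Phi}}\rangle_{L^{2}(T)}$.

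Because $\chi_{\rho}|_{T}$ is $\Gamma^{\circ}$-invariant and $\Gamma^{\circ}=N_{G}(T)/Z_{G}(T)$ acts on $T$ preserving the inner product, I may replace $F_{\Phi,\lambda,W_{\Phi}}$ by its $\Gamma^{\circ}$-average inside the pairing; and since $W_{\Phi}\subseteq\Gamma^{\circ}$ by Proposition \ref{P:Psi'} (as $\Phi=R(H,T)\subseteq\Psi'_{T}$), this average is exactly $F_{\Phi,\lambda,\Gamma^{\circ}}$. Hence $\mathscr{D}_{H,\tau}(\rho)=\langle\chi_{\rho}|_{T},\,F_{\Phi,\lambda,\Gamma^{\circ}}\rangle_{L^{2}(T)}$ for all $\rho$, which makes the implication $F_{\Phi_1,\lambda_1,\Gamma^{\circ}}=F_{\Phi_2,\lambda_2,\Gamma^{\circ}}\Rightarrow\mathscr{D}_{H_1,\tau_1}=\mathscr{D}_{H_2,\tau_2}$ immediate.

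The converse requires that the only $\Gamma^{\circ}$-invariant element of $\bbQ[X^{\ast}(T)]$ orthogonal to every $\chi_{\rho}|_{T}$ is zero, applied to $D:=F_{\Phi_1,\lambda_1,\Gamma^{\circ}}-F_{\Phi_2,\lambda_2,\Gamma^{\circ}}$. I expect this to be the main obstacle, and it reduces to the spanning statement that $\{\chi_{\rho}|_{T}:\rho\in\hat{G}\}$ is dense in the $\Gamma^{\circ}$-invariant functions on $T$. This in turn rests on the fact that two elements of $T$ are $G$-conjugate precisely when they are $\Gamma^{\circ}$-conjugate; granting it, $\{\chi_{\rho}|_{T}\}$ is a conjugation-closed, point-separating subalgebra of $C(T)^{\Gamma^{\circ}}$ containing the constants, hence dense by Stone--Weierstrass, so $D$, being a $\Gamma^{\circ}$-invariant trigonometric polynomial, lies in the $L^{2}$-closure of a span it is orthogonal to and therefore vanishes. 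The delicate point to nail down is exactly this conjugacy fact for the possibly non-maximal torus $T$, where $\Gamma^{\circ}$ is strictly smaller than the Weyl group $W(G,S)$ of a maximal torus $S\supseteq T$.
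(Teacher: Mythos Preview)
Your proposal is correct and follows essentially the same route as the paper. The paper's computation that $F_{\Phi}(t)\chi_{\lambda}(t)=F_{\Phi,\lambda,W_{\Phi}}$ (via the Weyl denominator identity) is exactly your identity $\chi_{\tau}\,|D_{H}|^{2}=|W_{\Phi}|\,F_{\Phi,\lambda,W_{\Phi}}$, after noting that $|W_{\Phi}|F_{\Phi}=\prod_{\alpha\in\Phi}(1-[\alpha])=|D_{H}|^{2}$; the paper then averages over $\Gamma^{\circ}$ and defers everything else---the maximal-torus conjugacy and the separating property of $\{\chi_{\rho}|_{T}\}$ on $\Gamma^{\circ}$-invariants---to ``a similar argument as in the proof of \cite[Prop.~3.8]{Yu-dimension}.'' You have simply made those deferred steps explicit: the Sato--Tate support argument for the torus claim, and Stone--Weierstrass for the converse.

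One remark on the point you flag as delicate. The statement that two $G$-conjugate elements of a (not necessarily maximal) torus $T$ are already $N_{G}(T)$-conjugate is indeed the crux, and it is true; it is precisely what underlies \cite[Prop.~3.7--3.8]{Yu-dimension}, so once you invoke that reference (as the paper does) the gap closes. A direct argument: if $gt_{1}g^{-1}=t_{2}$, pick a maximal torus $S$ of $Z_{G}(T)^{\circ}$ (hence of $G$); both $S$ and $gSg^{-1}$ are maximal tori of $Z_{G}(t_{2})^{\circ}$, so some $l\in Z_{G}(t_{2})^{\circ}$ conjugates $gSg^{-1}$ to $S$, giving $n:=lg\in N_{G}(S)$ with $nt_{1}n^{-1}=t_{2}$; then one checks via the root-space decomposition that the stabilizer of $T$ in $W(G,S)$ maps onto $\Gamma^{\circ}$ modulo $W(Z_{G}(T)^{\circ},S)$, which lets one adjust $n$ within $Z_{G}(t_{2})^{\circ}\cap N_{G}(S)$ to land in $N_{G}(T)$. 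Note also that your support argument implicitly uses that the $\Gamma^{\circ}$-symmetrized density $F_{\Phi,\lambda,\Gamma^{\circ}}$ is not identically zero; this is automatic since pairing it against some $\chi_{\rho}|_{T}$ gives $\mathscr{D}_{H,\tau}(\rho)>0$ for any $\rho\subset\Ind_{H}^{G}\tau$.
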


\begin{proof}
We first calculate $F_{\Phi}(t)\chi_{\lambda}(t)$, where $H$ is a connected closed subgroup of $G$ with $T$ a maximal
torus of $H$, $\Phi\subset X^{\ast}(T)$ is the root system of $H$, $F_{\Phi}$ is the Weyl product of $H$, and
$\chi_{\lambda}$ is the character of an irreducible representation of $H$ with highest weight $\lambda$. Write $\delta
=\delta_{\Phi}$. The calculation goes as follows, \begin{eqnarray*}&&|W_{\Phi}|F_{\Phi}(t)\chi_{\lambda}(t)\\&=&
\chi_{\lambda}\prod_{\alpha\in\Phi}\big(1-[\alpha]\big)\\&=&\prod_{\alpha\in\Phi^{+}}\big([\frac{-\alpha}{2}]-
[\frac{\alpha}{2}]\big)(\chi_{\lambda}\prod_{\alpha\in\Phi^{+}}\big([\frac{\alpha}{2}]-[\frac{-\alpha}{2}]\big))\\&=
&\big(\sum_{w\in W_{\Phi}}\sgn(w)[-w\delta]\big)\big(\sum_{\gamma\in W_{\Phi}}\sgn(\gamma)[\gamma(\lambda+\delta)]\big)
\\&=&\sum_{w,\gamma\in W_{\Phi}}\sgn(w)\sgn(\gamma)[-w\delta+\gamma(\lambda+\delta)]\\&=&\sum_{\gamma\in W_{\Phi}}
\gamma\big(\sum_{w\in W_{\Phi}}\sgn(w) [\lambda+\delta-w\delta]\big)\\&=&|W_{\Phi}|F_{\Phi,\lambda,W_{\Phi}}.
\end{eqnarray*} Then, $F_{\Phi}(t)\chi_{\lambda}(t)=F_{\Phi,\lambda,W_{\Phi}}$. %\begin{equation}\label{Eq:Fchi} %\end{equation}
Due to $W_{\Phi}\subset\Gamma^{0}$, we have $$\frac{1}{|\Gamma^{0}|}\sum_{\gamma\in\Gamma^{0}}
\gamma\cdot F_{\Phi,\lambda,W_{\Phi}}=F_{\Phi,\lambda,\Gamma^{\circ}}.$$ Then, a similar argument
as in the proof of \cite[Prop. 3.8]{Yu-dimension} shows the conclusion of the proposition.
\end{proof}

The following proposition can be shown in the way as the proof of \cite[Prop. 3.8]{Yu-dimension}.

\begin{proposition}\label{P:tau-linear}
Given a compact Lie group $G$, let $H_{1},H_2,\dots,H_{s}\subset G$ ($s\geq 2$) be a collection of closed connected
subgroups of $G$. For a set of non-zero constants $c_1,\cdots,c_{s}$, in order for $\sum_{1\leq i\leq s}c_{i}
\mathscr{D}_{H_{i},\tau_{i}}=0$ holds it is necessary and sufficient that: for any torus $T$ of $G$, \[\sum_{1\leq
j\leq t}c_{i_{j}}F_{\Phi_{i_{j}},\lambda_{i_{j}},\Gamma^{\circ}}=0.\] Here $\Gamma^{\circ}=N_{G}(T)/Z_{G}(T),$
$\{H_{i_{j}}: i_1\leq i_2\leq\cdots\leq i_{t}\}$ are all subgroups amongst $\{H_{i}: 1\leq i\leq s\}$ with $H_{i_{j}}$
contains a torus conjugate to $T$, $\Phi_{i_{j}}$ is the root system of $H_{i_{j}}$ with respect to $T$, and
$\lambda_{i}$ is highest weight of $\tau_i$.
\end{proposition}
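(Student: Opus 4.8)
The plan is to reduce the vanishing of the linear combination $\sum_{i}c_{i}\mathscr{D}_{H_{i},\tau_{i}}$ to a family of identities, one for each torus $T$, among the twisted Weyl products $F_{\Phi,\lambda,\Gamma^{\circ}}$, following the scheme of the proof of \cite[Prop. 3.8]{Yu-dimension}. The only genuinely new ingredient is a character-theoretic formula for a single $\tau$-dimension datum, which upgrades the formula for ordinary dimension data used there.

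First I would record, for a connected closed subgroup $H\subseteq G$ with maximal torus $T$, root system $\Phi=R(H,T)$, and $\tau\in\widehat{H}$ of highest weight $\lambda$, the identity
\[\mathscr{D}_{H,\tau}(\rho)=\langle F_{\Phi,\lambda,\Gamma^{\circ}},\chi_{\rho}|_{T}\rangle_{T},\qquad\rho\in\widehat{G},\]
where $\langle\,,\rangle_{T}$ is the Hermitian pairing on $\bbQ[X^{\ast}(T)]$ for which $\{[\mu]\}$ is orthonormal. To derive it I combine Frobenius reciprocity, $\mathscr{D}_{H,\tau}(\rho)=\dim\Hom_{H}(\tau,\rho|_{H})=\int_{H}\overline{\chi_{\lambda}}\,\chi_{\rho}$, with the Weyl integration formula on $H$, which rewrites the integral as $\frac{1}{|W_{\Phi}|}\int_{T}\overline{\chi_{\lambda}}\,\chi_{\rho}\prod_{\alpha\in\Phi}(1-[\alpha])$. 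Since $\prod_{\alpha\in\Phi}(1-[\alpha])=|W_{\Phi}|F_{\Phi}$ is invariant under $[\mu]\mapsto[-\mu]$ (because $\Phi=-\Phi$) and the value is a real number, conjugating and invoking the computation $F_{\Phi}\chi_{\lambda}=F_{\Phi,\lambda,W_{\Phi}}$ from the proof of Proposition \ref{P:tau-character} gives $\mathscr{D}_{H,\tau}(\rho)=\langle F_{\Phi,\lambda,W_{\Phi}},\chi_{\rho}|_{T}\rangle_{T}$. Finally, as $\chi_{\rho}|_{T}$ and the pairing are both $\Gamma^{\circ}$-invariant, I may replace $F_{\Phi,\lambda,W_{\Phi}}$ by its $\Gamma^{\circ}$-average, which equals $F_{\Phi,\lambda,\Gamma^{\circ}}$ by the same proof, yielding the displayed formula.

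With this formula, summing over $i$ turns the relation $\sum_{i}c_{i}\mathscr{D}_{H_{i},\tau_{i}}=0$ into the single requirement that $\sum_{i}c_{i}\langle F_{\Phi_{i},\lambda_{i},\Gamma^{\circ}},\chi_{\rho}|_{T_{i}}\rangle_{T_{i}}=0$ for all $\rho\in\widehat{G}$, where $T_{i}$ is a maximal torus of $H_{i}$. This is now formally the statement treated in \cite[Prop. 3.8]{Yu-dimension} with the Weyl product $F_{\Phi}$ systematically replaced by the twisted average $F_{\Phi,\lambda,\Gamma^{\circ}}$, and I would run the same argument. The crux, and the step I expect to be the main obstacle, is to decouple the contributions attached to tori of different dimensions: the terms live on distinct tori $T_{i}$, and one must separate them torus by torus. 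To do this I test the identity against characters $\chi_{\rho}$ and project to each torus $T$ by integrating over a complementary subtorus; only the subgroups $H_{i}$ containing a conjugate of $T$ survive the projection, and each then contributes the datum attached to the root system of $H_{i}$ with respect to $T$. A downward induction on $\dim T$, together with the linear independence of the orbit-sums $\chi^{\ast}_{\lambda,\Gamma^{\circ}}$ (the basis of $\bbQ[X^{\ast}(T)]^{\Gamma^{\circ}}$ recorded above, in terms of which each $F_{\Phi_{i},\lambda_{i},\Gamma^{\circ}}$ is expanded), then isolates exactly the stated per-torus relation $\sum_{j}c_{i_{j}}F_{\Phi_{i_{j}},\lambda_{i_{j}},\Gamma^{\circ}}=0$ and shows that the whole family of these relations is equivalent to the global vanishing. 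The twist by $\lambda_{i}$ only shifts which weights enter $A_{\Phi_{i},\lambda_{i}}$, so it leaves the support and independence bookkeeping of \cite[Prop. 3.8]{Yu-dimension} intact, and the argument carries over verbatim.
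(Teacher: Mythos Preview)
Your proposal is correct and follows essentially the same approach as the paper, which in fact gives no details beyond pointing to \cite[Prop. 3.8]{Yu-dimension}; you have correctly identified that the only new input is the identity $F_{\Phi}\chi_{\lambda}=F_{\Phi,\lambda,W_{\Phi}}$ from the proof of Proposition \ref{P:tau-character}, after which the torus-by-torus separation argument of \cite[Prop. 3.8]{Yu-dimension} goes through unchanged.
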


Similar as for dimension datum, one proposes the following two questions which concern the equalities and linear relations
among $\tau$-dimension data.

\begin{question}\label{Q:FR3}
Given a root system $\Psi$, when $F_{\lambda_1,\Phi_1,\Aut(\Psi)}=F_{\lambda_2,\Phi_2,\Aut(\Psi)}$ for two sub-root systems
$\Phi_1,\Phi_2$ of $\Psi$ and two characters $\lambda_1,\lambda_2$ in the lattice $$\Lambda_{\Psi}=\{\lambda\in\mathbb{Q}\Psi:
\frac{2(\lambda,\alpha)}{(\alpha,\alpha)}\in\mathbb{Z},\forall \alpha\in\Psi\}.$$
\end{question}

\begin{question}\label{Q:FR4}
Given a root system $\Psi$, which linear relates are there among the characters $\{F_{\lambda,\Phi,W_{\Psi}}:\Phi\subset\Psi,
\lambda\in\Lambda_{\Psi}\}$?
\end{question}

Similar as corresponding questions for dimension datum, one may reduce both Question \ref{Q:FR3} and Question \ref{Q:FR4}
to the case that $\Psi$ is an irreducible root system. In this paper we do not intend to solve Questions \ref{Q:FR3}
and \ref{Q:FR4}, but only discuss Question \ref{Q:FR3} in the case that $\Psi$ is an irreducible non-reduced root system of
rank $n$.

\subsection{The case when $\Psi=\BC_{n}$}

There is a nice idea in \cite{Larsen-Pink} which transfers characters $F_{\Phi,0,W_{\BC_{n}}}$ into polynomials. In
\cite{An-Yu-Yu} and \cite{Yu-dimension}, we further find matrix expression for the resulting polynomials. Here, we extend
these to the characters $F_{\Phi,\lambda,W_{\BC_{n}}}$.
Following \cite[Section 7]{Yu-dimension}, we briefly recall the idea of \cite{Larsen-Pink} which identifies
the direct limit of character groups with polynomial ring. Set \begin{eqnarray*}&& \bbZ^{n}:=\bbZ\BC_{n}=
\Lambda_{\BC_{n}}=\span_{\bbZ}\{e_1,e_2,...,e_{n}\},\\&& W_{n}:=\Aut(\BC_{n})=W_{\BC_{n}}=
\{\pm{1}\}^{n}\rtimes S_{n},\\&&\bbZ_{n}:=\bbQ[\bbZ^{n}],\\&& Y_{n}:=\bbZ_{n}^{W_{n}}. \end{eqnarray*} For
$m\leq n$, the injection\[\bbZ^{m}\hookrightarrow\bbZ^{n}:(a_1,...,a_{m})\mapsto(a_1,...,,a_{m},0,...,0)\]
extends to an injection $i_{m,n}:\bbZ_{m}\hookrightarrow\bbZ_{n}$. Define $\phi_{m,n}:\bbZ_{m}\rightarrow
\bbZ_{n}$ by \[\phi_{m,n}(z)=\frac{1}{|W_{n}|}\sum_{w\in W_{n}}w(i_{m,n}(z)).\] Thus $\phi_{m,n}\phi_{k,m}=
\phi_{k,n}$ for any $k\leq m\leq n$ and the image of $\phi_{m,n}$ lies in $Y_n$. Hence $\{Y_{m}:\phi_{m,n}\}$
forms a direct system and we define \[Y=\lim_{\longrightarrow_{n}} Y_{n}.\] Define the map $j_{n}:\bbZ_{n}
\rightarrow Y$ by composing $\phi_{n,p}$ with the injection $Y_{p}\hookrightarrow Y$. The isomorphism
$\bbZ^{m}\oplus\bbZ^{n}\longrightarrow\bbZ^{m+n}$ gives a canonical isomorphism $M: \bbZ_{m}\otimes_{\bbQ}
\bbZ_{n}\longrightarrow\bbZ_{m+n}$. Given two elements of $Y$ represented by $y\in Y_{m}$ and $y'\in Y_{n}$
we define \[yy'=j_{m+n}(M(y\otimes y')).\] This product is independent of the choice of $m$ and $n$ and
makes $Y$ a commutative associative algebra.

The monomials $[e_1]^{k_1}\cdots[e_{n}]^{k_{n}}$ ($k_1,k_2,\cdots,k_{n}\in\bbZ$) form a $\bbQ$ basis of $\bbZ_{n}$,
where $[e_i]^{k_i}=[k_ie_i]\in\bbZ_1$ is a linear character. Hence $Y$ has a $\bbQ$ basis \[e(k_1,k_2,...,k_{n})=
j_{n}([e_1]^{k_1}\cdots[e_{n}]^{k_{n}})\] indexed by $n\geq 0$ and $k_1\geq k_2\geq\cdots\geq k_{n}\geq 0$. Mapping
$e(k_1,k_2,...,k_{n})$ to $x_{k_1}x_{k_2}\cdots x_{k_{n}}$, we get a $\bbQ$ linear map \[E: Y\longrightarrow\bbQ[x_0,
x_1,...,x_{n},...].\] This map $E$ is an algebra isomorphism. Here $x_0=1$ and write as $x_{0}$ for notational
convenience. For any $k_1\geq k_2\geq\cdots\geq k_{n}\geq 0$ (each $k_{i}\in\mathbb{Z}$) and $\lambda=k_1e_1+k_2e_2+
\cdots+k_{n}e_{n}$, one has $$j_{n}(\chi^{\ast}_{\lambda,W_{n}})=e(k_1,k_2,\dots,k_{n})\in Y$$ and
$$E(j_{n}(\chi^{\ast}_{\lambda,W_{n}}))=x_{k_1}x_{k_2}\cdots x_{k_{n}}.$$ Given $f\in\bbQ[x_0,x_1,...]$, set
\[\sigma(f)(x_0,x_1,...,x_{2n},x_{2n+1},...)=f(x_0,-x_1,...,x_{2n},-x_{2n+1},...).\] Then, $\sigma$ is an involutive
automorphism of $\bbQ[x_0,x_1,...]$.

Write $a_{n}(\lambda)$, $b_{n}(\lambda)$, $c_{n}(\lambda)$, $d_{n}(\lambda)$ for the image of $j_{n}(F_{\Phi,\lambda,
W_{n}})$ under $E$ for $\Phi=\A_{n-1}$, $\B_{n}$, $\C_{n}$ or $\D_{n}$, and a weight $\lambda\in\mathbb{Z}^{n}$.
Observe that $a_{n}(\lambda)$, $b_{n}(\lambda)$, $c_{n}(\lambda)$, $d_{n}(\lambda)$ are homogeneous polynomials of
degree $n$ with integer coefficients. Write $b'_{n}(\lambda)=(-1)^{\sum_{1\leq i\leq n}k_{i}}\sigma(b_{n}(\lambda))$.
Define matrices \begin{eqnarray*}&&\qquad\qquad\qquad\qquad\qquad A_{n}(\lambda)=(x_{|k_{j}+i-j|})_{n\times n},\\&&
B_{n}(\lambda)\!=\!(x_{|k_{j}+i-j|}\!-\!x_{|k_{j}+2n+1-i-j|})_{n\times n},\ B'_{n}(\lambda)\!=\!(x_{|k_{j}+i-j|}\!+
\!x_{|k_{j}+2n+1-i-j|})_{n\times n},\\&& C_{n}(\lambda)\!=\!(x_{|k_{j}+i-j|}\!-\!x_{|k_{j}+2n+2-i-j|})_{n\times n},
\ D_{n}(\lambda)\!=\!(x_{|k_{j}+i-j|}\!+\!x_{|k_{j}+2n-i-j|})_{n\times n},\\&& \qquad\qquad\qquad\qquad\qquad
D'_{n}(\lambda)=(y_{i,j})_{n\times n},\end{eqnarray*} where $y_{i,j}=x_{|k_{j}+i-j|}\!+\!x_{|k_{j}+2n-i-j|}$ if
$i,j\leq n-1$, $y_{n,j}=\sqrt{2}x_{|k_{j}+n-j|}$, $y_{i,n}=\frac{\sqrt{2}}{2}(x_{|k_{n}+i-n|}+x_{|k_{n}+n-i|})$ and
$y_{n,n}=x_{|k_{n}|}$.

\begin{lemma}\label{L:polynomial-matrix}
We have \[\det A_{n}(\lambda)=a_{n}(\lambda),\quad \det B_{n}(\lambda)=b_{n}(\lambda),\]
\[\det B'_{n}(\lambda)=b'_{n}(\lambda),\quad \det C_{n}(\lambda)=c_{n}(\lambda),\]
\[\frac{1}{2}\det D_{n}(\lambda)=\det D'_{n}(\lambda)=d_{n}(\lambda).\]
\end{lemma}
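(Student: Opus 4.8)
The plan is to expand each of the characters $F_{\Phi,\lambda,W_n}$ into the basis $\{\chi^{\ast}_{\mu,W_n}\}$, push everything through the algebra isomorphism $E\circ j_n$ so that each basis element becomes a monomial, and then recognise the resulting alternating sum as the Leibniz expansion of the asserted determinant. The one fact I would isolate at the outset is that for an \emph{arbitrary} weight $\mu=\sum_i\mu_i e_i\in\bbZ^n$ one has $E(j_n(\chi^{\ast}_{\mu,W_n}))=\prod_{i=1}^n x_{|\mu_i|}$: this follows from the stated formula for dominant $\mu$ together with the $W_n$-invariance of $\chi^{\ast}_{\mu,W_n}$ (which lets one replace $\mu$ by the vector of sorted absolute values of its coordinates) and the commutativity of the variables $x_a$. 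Combined with $F_{\Phi,\lambda,W_n}=\sum_{w\in W_{\Phi}}\sgn(w)\,\chi^{\ast}_{\lambda+\delta_{\Phi}-w\delta_{\Phi},W_n}$, this reduces each of $a_n,b_n,c_n,d_n$ to a sum $\sum_{w\in W_{\Phi}}\sgn(w)\prod_i x_{|(\lambda+\delta_{\Phi}-w\delta_{\Phi})_i|}$.

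Next I would compute $\delta_{\Phi}$ and the coordinates $(\lambda+\delta_{\Phi}-w\delta_{\Phi})_i$ in each type, writing $\lambda=\sum k_i e_i$. For $\Phi=\A_{n-1}$ one has $\delta_i=\tfrac{n+1}{2}-i$ and $w\in S_n$, giving $(\lambda+\delta-w\delta)_i=k_i-i+w^{-1}(i)$; the sum is immediately the Leibniz expansion of $\det(x_{|k_i-i+j|})$, which is the transpose of $A_n(\lambda)$, so $\det A_n(\lambda)=a_n(\lambda)$. For $\Phi=\B_n,\C_n$ the Weyl group is the full group $W_n$ of signed permutations, which I parametrise by $(\sigma,\eta)\in S_n\times\{\pm1\}^n$ with $\sgn(w)=\sgn(\sigma)\prod_i\eta_i$; here $\delta_i=n+\tfrac12-i$ (type $\B$) or $\delta_i=n+1-i$ (type $\C$), and the $i$-th coordinate equals $k_i+\sigma(i)-i$ when $\eta_i=+1$ and $k_i+2n+c-i-\sigma(i)$ when $\eta_i=-1$, with $c=1$ for $\B_n$ and $c=2$ for $\C_n$. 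Performing the sum over $\eta$ for fixed $\sigma$ factors the $i$-th slot into $x_{|k_i+\sigma(i)-i|}-x_{|k_i+2n+c-i-\sigma(i)|}$, so the whole expression becomes $\sum_{\sigma}\sgn(\sigma)\prod_i(\cdots)$, i.e.\ $\det B_n(\lambda)$ (resp.\ $\det C_n(\lambda)$) up to transpose.

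For $B'_n(\lambda)$ I would start from $b_n(\lambda)=\det B_n(\lambda)$ and apply the ring automorphism $\sigma$, using $\sigma(x_m)=(-1)^m x_m$ and $(-1)^{|m|}=(-1)^m$. Applying $\sigma$ to the $(i,j)$ entry $x_{|k_j+i-j|}-x_{|k_j+2n+1-i-j|}$ and pulling the scalar $(-1)^i$ out of row $i$ and $(-1)^{k_j-j}$ out of column $j$ turns the difference into the sum $x_{|k_j+i-j|}+x_{|k_j+2n+1-i-j|}$, i.e.\ into $B'_n(\lambda)$; the extracted signs multiply to $(-1)^{\sum_j k_j}$, so $b'_n(\lambda)=(-1)^{\sum_i k_i}\sigma(b_n(\lambda))=\det B'_n(\lambda)$.

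The type $\D_n$ case is where the real work lies and is the step I expect to be the main obstacle. Here $W_{\D_n}$ is the index-two subgroup of signed permutations with an even number of sign changes, so $\sgn(w)=\sgn(\sigma)$ and the $\eta$-sum is constrained to $\prod_i\eta_i=+1$; with $\delta_i=n-i$ the two slot-values are $u_i=x_{|k_i+\sigma(i)-i|}$ and $v_i=x_{|k_i+2n-i-\sigma(i)|}$. The even-sign-change constraint makes the inner sum $\tfrac12\big(\prod_i(u_i+v_i)+\prod_i(u_i-v_i)\big)$, so that $d_n(\lambda)=\tfrac12\big(\det D_n(\lambda)+\det M\big)$, where $M$ has $(i,j)$ entry $x_{|k_i+j-i|}-x_{|k_i+2n-i-j|}$. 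The crucial observation is that this difference vanishes identically when $j=n$ (since $|k_i+n-i|=|k_i+2n-i-n|$), so $M$ has an identically-zero column and $\det M=0$, whence $d_n(\lambda)=\tfrac12\det D_n(\lambda)$. Finally, comparing entries shows that $D'_n(\lambda)$ is exactly $D_n(\lambda)$ with its last row and last column each rescaled by $1/\sqrt2$ (the last row of $D_n$ being $2x_{|k_j+n-j|}$, the last column $x_{|k_n+i-n|}+x_{|k_n+n-i|}$, and the corner $2x_{|k_n|}$), so $\det D'_n(\lambda)=\tfrac12\det D_n(\lambda)=d_n(\lambda)$. The only genuine subtleties throughout are keeping the transpose convention straight (the natural Leibniz matrices are the transposes of the stated ones, harmless for determinants) and the sign bookkeeping in the $B'_n$ and $\D_n$ steps.
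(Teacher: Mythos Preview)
Your argument is correct and follows essentially the same route as the paper: reduce $F_{\Phi,\lambda,W_n}$ to an alternating sum of monomials via $E\circ j_n$, then recognise the Leibniz expansion of the asserted matrix (up to transpose), handling types $\B_n,\C_n$ by factoring the inner $\{\pm1\}^n$-sum into differences, and deriving $B'_n$ from $B_n$ by applying $\sigma$ and extracting row/column signs. The only cosmetic difference is in type $\D_n$: the paper uses the observation $s_{e_n}\delta_{\D_n}=\delta_{\D_n}$ to rewrite the $W_{\D_n}$-sum as $\tfrac12$ of a full $W_n$-sum weighted by the character $\epsilon'(\sigma,\eta)=\sgn(\sigma)$, which then factors directly into $\tfrac12\det D_n(\lambda)$; your version keeps the parity constraint $\prod_i\eta_i=1$, splits it as $\tfrac12(\det D_n(\lambda)+\det M)$, and kills $\det M$ via its zero $n$-th column. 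These are two phrasings of the same fact (the vanishing column is precisely the manifestation of $(\delta_{\D_n})_n=0$), so the proofs are equivalent.
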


\begin{proof}
First consider $\Phi=\A_{n-1}$. Then, $a_{n}(\lambda)=E(j_{n}(A_{\Phi,\lambda}))$, where \[A_{\Phi,\lambda}=\sum_{w\in S_{n}}
\sgn(w)[\lambda+\delta-w\delta]\] with $\delta=(\frac{n}{2}-\frac{1}{2},\frac{n}{2}-\frac{3}{2},\dots,\frac{1}{2}-\frac{n}{2})$.
For a permutation $w\in S_{n}$, one has \[E(j_{n}(\sgn(w)[\lambda+\delta-w\delta]))=\sgn(w)\prod_{1\leq j\leq n}x_{|k_{j}+
\tau(j)-j|},\] which is equal to the term in the expansion of $\det A_{n}(\lambda)$ corresponding to the permutation $w^{-1}$.
Hence, $\det A_{n}(\lambda)=a_{n}(\lambda)$.

Now consider $\Phi=\D_{n}$. Define a new character $\epsilon': W_{n}\rightarrow\{1\}$ by $\epsilon'|_{W_{\D_{n}}}=
\sgn|_{W_{\D_{n}}}$ and $\epsilon'(s_{e_1})=1$. Due to $s_{e_{n}}(\delta_{\D_{n}})=\delta_{\D_{n}}$, one has $$F_{\D_{n},
\lambda,W_{\D_{n}}}=\frac{1}{2}\sum_{w\in W_{n}}\epsilon'(w)\chi^{\ast}_{\lambda+\delta-w\delta,W_{n}}$$ where $\delta=
(n-\frac{1}{2},n-\frac{3}{2},\dots,\frac{1}{2})$. Put $E_{n}=\langle s_{e_{j}}:1\leq j\leq n\rangle\subset W_{n}$. Then,
$W_{n}=S_{n}\ltimes E_{n}$. Then, one shows that: for any given $w\in S_{n}$, \[\sum_{\gamma\in E_{n}}\epsilon'(w\gamma)
E(j_{n}(\chi^{\ast}_{\lambda+\delta-w\gamma\delta,W_{n}}))\] is equal to the term in the expansion of
$\det A_{n}(\lambda)$ corresponding to the permutation $w^{-1}$. Hence, $\frac{1}{2}\det D_{n}(\lambda)=d_{n}(\lambda)$.

The proof for $\det B_{n}(\lambda)=b_{n}(\lambda)$ and $\det C_{n}(\lambda)=c_{n}(\lambda)$ is similar to the proof for
$\frac{1}{2}\det D_{n}(\lambda)=d_{n}(\lambda)$. For these, $W_{\B_{n}}=W_{\C_{n}}=W_{n}$, and we just use the sign function
on $W_{n}$. From $\det B_{n}(\lambda)=b_{n}(\lambda)$, by applying the involutive automorphism $\sigma$ we get
$\det B'_{n}(\lambda)=b'_{n}(\lambda)$. It is clear that $\det D'_{n}(\lambda)=\frac{1}{2}\det D_{n}(\lambda)$. Thus,
$\det D'_{n}(\lambda)=d_{n}(\lambda)$.
\end{proof}

%\begin{proposition}\label{P:irreducible}
%Assume that $k_1\geq k_2\cdots\geq k_{n}\geq 0$, then each of $b_{n}(\lambda)$, $b'_{n}(\lambda)$, $c_{n}(\lambda)$,
%is an irreducible polynomial. Assume that $k_1\geq\cdots\geq k_{n-1}\geq|k_{n}|\geq 0$, then $d_{n}(\lambda)$ is an
%irreducible polynomial.
%end{proposition}

%\begin{proof}
%For $b_{n}(\lambda)$, the indeterminate appearing in it with largest index is $x_{a_1+2n-1}$, and $$b_{n}(\lambda)
%=-x_{a_1+2n-1}b_{n-1}(\lambda')+\cdots,$$ where $\lambda'=(a_2,\dots,a_{n})$. By induction, $b_{n-1}(\lambda')$
%is irreducible. Apparently, $b_{n-1}(\lambda')$ does not divide $b_{n}(\lambda)$, hence $b_{n}(\lambda)$ is
%irreducible. Similarly, $b'_{n}(\lambda)$, $c_{n}(\lambda)$, $d_{n}(\lambda)$ are irreducible.
%\end{proof}

\begin{proposition}\label{P:a=cd-a=bb}
(i) Let $n=2m+1$ be odd, $k_1\geq k_2\geq\cdots\geq k_{n}$, and $k_{n+1-i}+k_{i}=0$ ($\forall i$, $1\leq i\leq m$). Then $$a_{2m+1}(\lambda)=c_{m}(\lambda_1)d_{m+1}(\lambda_2),$$ where $\lambda_{1}=(k_1,\dots,k_{m})$, $\lambda_{2}=(k_1,\dots,
k_{m+1})$.

(ii) Let $n=2m$ be even, $k_1\geq k_2\geq\cdots\geq k_{n}$, and $k_{n+1-i}+k_{i}=0$ ($\forall i$, $1\leq i\leq m$). Then $$a_{2m}(\lambda)=b_{m}(\lambda_1)b'_{m}(\lambda_2),$$ where $\lambda_{1}=\lambda_2=(k_1,\dots,k_{m})$.
\end{proposition}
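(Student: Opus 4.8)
The plan is to realize $a_n(\lambda)=\det A_n(\lambda)$, with $A_n(\lambda)=(x_{|k_j+i-j|})_{n\times n}$ from Lemma~\ref{L:polynomial-matrix}, and to factor this determinant by a block reduction built on the near-centrosymmetry forced by the hypothesis. The relation $k_{n+1-i}+k_i=0$ (for $1\le i\le m$) amounts to $k_{n+1-j}=-k_j$ for every non-central index $j$ (all $j$ when $n=2m$, and all $j\ne m+1$ when $n=2m+1$). Using this, one checks that the $(n+1-i,n+1-j)$ entry of $A_n(\lambda)$ equals its $(i,j)$ entry whenever the column index is non-central, since $x_{|k_{n+1-j}+(n+1-i)-(n+1-j)|}=x_{|-k_j+j-i|}=x_{|k_j+i-j|}$. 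Thus $A_{2m}(\lambda)$ is genuinely centrosymmetric, while $A_{2m+1}(\lambda)$ is centrosymmetric except through its central column. I would treat both parities by the same device: conjugate by the block-diagonal matrix built from the anti-diagonal permutation matrix $J$, then clear blocks by row and column operations.

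For part (ii), let $n=2m$ and let $J$ be the $m\times m$ anti-diagonal permutation matrix. Centrosymmetry puts $A_{2m}(\lambda)$ in the form $\left(\begin{smallmatrix}P&Q\\ JQJ&JPJ\end{smallmatrix}\right)$, where $P=(x_{|k_j+i-j|})_{1\le i,j\le m}$. Conjugating by $\diag(I_m,J)$ gives $\left(\begin{smallmatrix}P&QJ\\ QJ&P\end{smallmatrix}\right)$, and the standard conjugation by $\left(\begin{smallmatrix}I&I\\ I&-I\end{smallmatrix}\right)$ block-diagonalizes this, so that $a_{2m}(\lambda)=\det(P-QJ)\det(P+QJ)$. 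A direct computation of $QJ$, using $k_{m+j}=-k_{m+1-j}$, yields $(QJ)_{ij}=x_{|k_j+2m+1-i-j|}$; hence $P-QJ=B_m(\lambda_1)$ and $P+QJ=B'_m(\lambda_2)$ with $\lambda_1=\lambda_2=(k_1,\dots,k_m)$, and therefore $a_{2m}(\lambda)=b_m(\lambda_1)b'_m(\lambda_2)$.

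For part (i), let $n=2m+1$. Conjugating by $\diag(I_m,1,J)$ brings $A_{2m+1}(\lambda)$ to $\left(\begin{smallmatrix}P&p&Q\\ q^{T}&\gamma&q^{T}\\ Q&J\wt p&P\end{smallmatrix}\right)$, where $Q=BJ$, the central row is symmetric, and the central column has top part $p$ and bottom part $J\wt p$ that differ when $k_{m+1}\neq0$. Here I would subtract the third block-column from the first and then add the first block-row to the third; this isolates $P-Q$ in the upper-left corner with zeros beneath it, giving $a_{2m+1}(\lambda)=\det(P-Q)\cdot\det\left(\begin{smallmatrix}\gamma&q^{T}\\ p+J\wt p&P+Q\end{smallmatrix}\right)$. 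One computes $(P-Q)_{ij}=x_{|k_j+i-j|}-x_{|k_j+2m+2-i-j|}$, so $P-Q=C_m(\lambda_1)$ with $\lambda_1=(k_1,\dots,k_m)$ and the first factor is $c_m(\lambda_1)$. For the second factor one reads off $\gamma=x_{|k_{m+1}|}$, $q_j=x_{|k_j+m+1-j|}$, and $p_i+(J\wt p)_i=x_{|k_{m+1}+i-m-1|}+x_{|k_{m+1}+m+1-i|}$, and compares with $D'_{m+1}(\lambda_2)$, $\lambda_2=(k_1,\dots,k_{m+1})$, specialized from the definition of $D'_{n}$. Expanding both $(m+1)\times(m+1)$ bordered determinants by $\det\left(\begin{smallmatrix}\gamma&v^{T}\\ u&S\end{smallmatrix}\right)=\gamma\det S-v^{T}\adj(S)u$, the factors $\sqrt2\cdot\tfrac{\sqrt2}{2}=1$ built into $D'_{m+1}$ make the two expressions coincide, so the second factor equals $\det D'_{m+1}(\lambda_2)=d_{m+1}(\lambda_2)$ and $a_{2m+1}(\lambda)=c_m(\lambda_1)d_{m+1}(\lambda_2)$.

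The routine part is the entrywise verification of the block shapes and of the identities $(QJ)_{ij}=x_{|k_j+2m+1-i-j|}$ and $(P\pm Q)_{ij}=x_{|k_j+i-j|}\pm x_{|k_j+2m+2-i-j|}$. The delicate point is the odd case: the central column is not symmetric when $k_{m+1}\ne0$, so the block-triangularization leaves the coupling vector $p+J\wt p$, and the real work is to choose the operations that isolate $P-Q$ and then to match the residual bordered determinant against the $\sqrt2$-normalized matrix $D'_{m+1}$ (equivalently, to absorb the factor $\tfrac12$ hidden in $d_{m+1}=\tfrac12\det D_{m+1}$). The bordered-determinant/adjugate identity is what makes this reconciliation transparent, and I expect it to be the crux of the argument.
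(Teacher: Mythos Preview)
Your argument is correct and follows essentially the same route as the paper: both exploit the centrosymmetry of $A_n(\lambda)$ forced by $k_{n+1-i}=-k_i$ and block-diagonalize via the anti-diagonal matrix $J=L_m$, identifying the resulting blocks with $B_m,B'_m$ (even case) and $C_m,D'_{m+1}$ (odd case). The only cosmetic difference is in the odd case, where the paper conjugates by a single matrix with $\sqrt{2}$ entries to land directly on $D'_{m+1}(\lambda_2)$, whereas you use plain row/column operations and then invoke the bordered-determinant identity $\det\bigl(\begin{smallmatrix}\gamma&v^{T}\\u&S\end{smallmatrix}\bigr)=\gamma\det S-v^{T}\adj(S)\,u$ to absorb the $\sqrt{2}\cdot\tfrac{\sqrt{2}}{2}=1$; this is the same computation unpacked.
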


%Let $n\geq 1$, $a_1\geq a_2\cdots\geq a_{n}$. Then $a_{n}(\lambda)$ is reducible only if $a_{i}+a_{n+1-i}=0$ for any
%$i$, $1\leq i\leq n/2$.

\begin{proof}
(i) Let $L_{m}=(\delta_{i,m+1-j})_{1\leq i,j\leq m}$, where $\delta_{i,j}$ is the Kronecker symbol. Then, $L_{m}^{2}=I$. The
matrix $A_{2m}(\lambda)$ is of the form $$\left(\begin{array}{cc}X&Y\\L_{m}YL_{m}&L_{m}XL_{m}\\\end{array}\right),$$ where
$X,Y$ are two $m\times m$ matrices. By calculation we have \begin{eqnarray*}&&\frac{1}{2}\left(\begin{array}{cc}I&L_{m}\\
-L_{m}&I\\\end{array}\right)\left(\begin{array}{cc}X&Y\\L_{m}YL_{m}&L_{m}XL_{m}\\\end{array}\right)\left(\begin{array}{cc}I
&-L_{m}\\L_{m}&I\\\end{array}\right)\\&=&\left(\begin{array}{cc}X+YL_{m}&0\\0&L_{m}XL_{m}-L_{m}Y\\\end{array}\right).
\end{eqnarray*} One can check that $X+YL_{m}$ (resp. $X-YL_{m}$) is just the matrix $B'_{m}(\lambda_{2})$ (resp.
$B_{m}(\lambda_{1})$). Thus, $a_{2m}(\lambda)=b_{m}(\lambda_1)b'_{m}(\lambda_2)$ by Lemma \ref{L:polynomial-matrix}.

(ii) The matrix $A_{2m+1}(\lambda)$ is of the form $$\left(\begin{array}{ccc}X&\beta^{t}&Y\\ \alpha&z&\alpha L_{m}\\
L_{m}YL_{m}&\gamma^{t}&L_{m}XL_{m}\\\end{array}\right),$$ where $X,Y$ are two $m\times m$ matrices, $\alpha,\beta,\gamma$
are $1\times m$ vectors. By calculation we have \begin{eqnarray*}&&\frac{1}{2}\left(\begin{array}{ccc}I&&L_{m}\\&\sqrt{2}
&\\-L_{m}&&I\\\end{array}\right)\left(\begin{array}{ccc}X&\beta^{t}&Y\\\alpha&z&\alpha L_{m}\\L_{m}YL_{m}&\gamma^{t}&
L_{m}XL_{m}\\\end{array}\right)\left(\begin{array}{ccc}I&&-L_{m}\\&\sqrt{2}&\\L_{m}&&I\\\end{array}\right)\\&=&\left(
\begin{array}{ccc}X+YL_{m}&\frac{\sqrt{2}}{2}(\beta^{t}+L_{m}\gamma^{t})&0\\\sqrt{2}\alpha&z&0\\0&\frac{\sqrt{2}}{2}
(-L_{m}\beta^{t}+\gamma^{t})&L_{m}XL_{m}-L_{m}Y\\\end{array}\right).\end{eqnarray*} The matrix $$\left(\begin{array}{cc}
X+YL_{m}&\frac{\sqrt{2}}{2}(\beta^{t}+L_{m}\gamma^{t})\\\sqrt{2}\alpha&z\\\end{array}\right)$$ is just
$D'_{m+1}(\lambda_{2})$, and the matrix $X-YL_{m}$ is just $C_{m}(\lambda_1)$. Thus, $a_{2m+1}(\lambda)=c_{m}(\lambda_1)
d_{m}(\lambda_2)$ by Lemma \ref{L:polynomial-matrix}.
\end{proof}

\subsection{Isospectral hermitian vector bundles}

Let $H$ be a closed subgroup of a connected compact Lie group $G$, and $(V_{\tau},\tau)$  be a finite-dimensional
irreducible complex linear representation of $H$ ($V_{\tau}$ is the representation space of $\tau\in\widehat{H}$).
Write $E_{\tau}=G\times_{H}V_{\tau}$ for a $G$-equivariant vector bundle on $X=G/H$ induced from $V_{\tau}$.
As a set, $E_{\tau}$ is the set of equivalence classes in $G\times V_{\tau}$, $$(g,v)\sim(g',v')\Leftrightarrow
\exists x\in H\textrm{ s.t. } g'=gx,\ v'=x^{-1}\cdot v.$$ Write $C^{\infty}(G/H,E_{\tau})$ for the space of smooth
sections of $E_{\tau}$. Then, $$C^{\infty}(G/H,E_{\tau})=(C^{\infty}(G,V_{\tau}))^{H},$$ where $C^{\infty}(G,V_{\tau})$
is the space of smooth functions $f: G\rightarrow V_{\tau}$ and $H$ acts on it through $$(xf)(g)=x\cdot f(gx).$$ The
group $G$ acts on $C^{\infty}(G/H,E_{\tau})$ through $$(g'f)(g)=f(g'^{-1}g).$$ By differentiation, we get an action
of $\mathfrak{g}_{0}=\Lie G$ on $C^{\infty}(G/H,E_{\tau})$, and so an action of the universal enveloping algebra
$U(\mathfrak{g}_{0})$ on $C^{\infty}(G/H,E_{\tau})$. Let $\Delta_{\tau}$ denote the resulting differential operator
on $C^{\infty}(G/H,E_{\tau})$ from the Casimir element in the center of $U(\mathfrak{g}_{0})$. The action
of $\Delta_{\tau}$ on $C^{\infty}(G/H,E_{\tau})$ commutes with the action by $G$, and it is a second order elliptic
differential operator.

Choose an $H$-invariant positive definite inner product $(\cdot,\cdot)$ on $V_{\tau}$ (which is unique up to scalar).
It induces a hermitian metric on $E_{\tau}$ and makes it a hermitian vector bundle. Define a hermitian pairing
$(\cdot,\cdot)$ on $C^{\infty}(G/H,E_{\tau})$ by $$(f_1,f_2)=\int_{G/H}(f_{1}(g),f_{2}(g))d(gH),$$ where $d(gH)$ is
a $G$-equivariant measure on $G/H$ of volume $1$. As $\Delta_{\tau}$ is an elliptic differential operator, any
eigen-function of it in $L^{2}(G/H,E_{\tau})$ is a smooth section. By the Peter-Weyl theorem, \begin{equation}
\label{Eq:PW}L^{2}(G/H,E_{\tau})=\hat{\bigoplus}_{\rho\in\widehat{G}}L^{2}(G/H,E_{\tau})_{\rho}\end{equation} where
$L^{2}(G/H,E_{\tau})_{\rho}$ is the $\rho$-isotropic subspace which has multiplicity
equal to $\dim\Hom_{H}(\tau,\rho|_{H})$ by the Frobenius reciprocity. We know that $\Delta_{\tau}$ acts on the
$\rho$-isotropic component $L^{2}(G/H,E_{\tau})_{\rho}$ by a scalar determined by $\rho$. By this, we have the
following fact: if $\mathscr{D}_{H_1,\tau_1}=\mathscr{D}_{H_2,\tau_2}$, then the Hermitian vector bundles
$E_{\tau_1}=G\times_{H_1}V_{\tau_{1}}$ (on $G/H_1$) and $E_{\tau_2}=G\times_{H_2} V_{\tau_2}$ (on $G/H_2$) are
isospectral with respect to the differential operators $\Delta_{\tau_1}$ and $\Delta_{\tau_2}$.

%\footnotetext{We could define a Laplace-Beltrami operator on $C^{\infty}(G/H,E_{\tau})$ from the hermitian
%metric on $E_{\tau}$ such that it coincides with $\Delta_{\tau}$ up to a scalar.}

In $G=\SU(4n+2)$, set $$H_1=\{(A,\overline{A}): A\in\U(2n+1)\},$$ $$H_2=\{(A,B): A\in\Sp(2n), B\in\SO(2n+2)\}.$$ Then,
$H_1\cong\U(2n+1)$, $H_2\cong\Sp(n)\times\SO(2n+2)$. For a sequence of integers $k_1\geq k_2\geq\cdots\geq k_{2n+1}$
with $k_{i}+k_{2n+2-i}=0$ for any $i$, $1\leq i\leq n$, write $\lambda=(k_1,k_2,\dots,k_{2n+1})$ for a weight of
$H_1\cong\U(2n+1)$. Write $\lambda_1=(k_1,\dots,k_{n})$ for a weight of $\Sp(2n)$, $\lambda_2=(k_{1},\dots,k_{n+1})$
for a weight of $\SO(2n+2)$, and $\lambda'=(\lambda_1,\lambda_2)$ for a weight of $H_2$. Write $\tau_{\lambda}$ (resp.
$\tau_{\lambda'}$) for an irreducible representation of $H_1$ (resp. $H_2$) with highest weight $\lambda$ (resp. $\lambda'$).
By Prop. \ref{P:a=cd-a=bb} we have the following theorem.

\begin{theorem}\label{T:isomorphic}
For $G=\SU(4n+2)$, subgroups $H_1,H_2$ and representations $\tau_{\lambda}$ and $\tau_{\lambda'}$ as above, the hermitian
vector bundles $E_{\tau_{\lambda}}=G\times_{H_1}V_{\tau_{\lambda}}$ (on $G/H_1$) and $E_{\tau_{\lambda'}}=G\times_{H_2}
V_{\tau_{\lambda'}}$ (on $G/H_2$) are isospectral with respect to the differential operators $\Delta_{\tau_{i}}$ ($i=1,2$).
\end{theorem}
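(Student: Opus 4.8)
The plan is to obtain Theorem~\ref{T:isomorphic} by chaining three ingredients already in place: the general isospectrality principle recorded just above the statement, the reduction of equality of $\tau$-dimension data to an identity of characters (Proposition~\ref{P:tau-character}), and the polynomial factorization of Proposition~\ref{P:a=cd-a=bb}. First I would observe that it suffices to prove the single equality
\[
\mathscr{D}_{H_1,\tau_{\lambda}}=\mathscr{D}_{H_2,\tau_{\lambda'}}.
\]
Indeed, once this holds, the Peter--Weyl decomposition \eqref{Eq:PW} together with the fact that $\Delta_{\tau}$ acts as a $\rho$-determined scalar on each isotypic summand $L^{2}(G/H,E_{\tau})_{\rho}$, whose multiplicity is $\dim\Hom_{H}(\tau,\rho|_{H})$, forces the two bundles to share the same spectrum. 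The entire problem is thereby transferred into the representation theory of $H_1$ and $H_2$.

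Next I would fix a common maximal torus. Writing $\bbC^{4n+2}=\bbC^{2n+1}\oplus\bbC^{2n+1}$, the diagonal torus of $\U(2n+1)$ is carried under $A\mapsto(A,\overline{A})$ to a rank-$(2n+1)$ torus $T$ that is simultaneously a maximal torus of $H_2=\Sp(n)\times\SO(2n+2)$; this is exactly the configuration analysed in \cite{An-Yu-Yu}. From that analysis one reads off that $\Psi_{T}=\BC_{2n+1}$, that the relative Weyl group $\Gamma^{\circ}=N_{G}(T)/Z_{G}(T)$ is the full automorphism group $\Aut(\BC_{2n+1})=W_{2n+1}$, and that under the identification $X^{\ast}(T)=\bbZ^{2n+1}$ the root system of $H_1$ is $\Phi_1=\A_{2n}$ while that of $H_2$ is $\Phi_2=\C_{n}\sqcup\D_{n+1}$, with the weights matching as $\lambda=(k_1,\dots,k_{2n+1})$, $\lambda_1=(k_1,\dots,k_{n})$ and $\lambda_2=(k_1,\dots,k_{n+1})$. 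By Proposition~\ref{P:tau-character} the desired equality of dimension data is then equivalent to
\[
F_{\Phi_1,\lambda,\Gamma^{\circ}}=F_{\Phi_2,\lambda',\Gamma^{\circ}}.
\]

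I would then push both sides through the Larsen--Pink transfer $E\circ j_{2n+1}$. Since $\Gamma^{\circ}=W_{2n+1}$, the left side is carried to $a_{2n+1}(\lambda)$ by the definition of $a_{n}(\lambda)$ and Lemma~\ref{L:polynomial-matrix}. For the right side, the product root system $\C_{n}\sqcup\D_{n+1}$ lives in $\bbZ^{n}\oplus\bbZ^{n+1}=\bbZ^{2n+1}$, and by the multiplicativity of the transfer on product root systems, coming from the algebra structure on $Y$ and the fact that $E$ is an algebra isomorphism, $F_{\Phi_2,\lambda',W_{2n+1}}$ is carried to the product $c_{n}(\lambda_1)\,d_{n+1}(\lambda_2)$. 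As $E\circ j_{2n+1}$ is injective on $W_{2n+1}$-invariants, the character identity is equivalent to the polynomial identity $a_{2n+1}(\lambda)=c_{n}(\lambda_1)\,d_{n+1}(\lambda_2)$. The hypothesis $k_{i}+k_{2n+2-i}=0$ for $1\le i\le n$ is precisely the antisymmetry condition of Proposition~\ref{P:a=cd-a=bb}(i) with $m=n$, which delivers this identity, and the theorem follows.

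The part requiring the most care is the structural bookkeeping rather than any new computation: one must confirm that $\Gamma^{\circ}$ is genuinely the full signed permutation group $W_{2n+1}$, and not a proper subgroup, since otherwise averaging over $\Gamma^{\circ}$ need not agree with the $\Aut(\BC_{2n+1})$-averaging on which the polynomial machinery rests; and one must normalize the coordinates of $X^{\ast}(T)$ so that the antisymmetry $k_i+k_{2n+2-i}=0$ on the $H_1$ side corresponds exactly to the splitting $(\lambda_1,\lambda_2)$ on the $H_2$ side. Both points are inherited from the $\tau$-trivial case treated in \cite{An-Yu-Yu}; with them in hand, Proposition~\ref{P:a=cd-a=bb} supplies all of the algebraic content.
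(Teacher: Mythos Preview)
Your proposal is correct and follows exactly the route the paper intends: the paper's ``proof'' is the single sentence ``By Prop.~\ref{P:a=cd-a=bb} we have the following theorem,'' and you have simply unpacked the implicit chain---isospectrality principle, Proposition~\ref{P:tau-character}, the Larsen--Pink transfer $E\circ j_{2n+1}$, and finally Proposition~\ref{P:a=cd-a=bb}(i)---with the structural verifications (common torus, $\Gamma^{\circ}=W_{2n+1}$, multiplicativity on the product root system $\C_{n}\sqcup\D_{n+1}$) deferred to \cite{An-Yu-Yu} just as the paper does.
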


%\begin{proof}
%we have $\mathscr{D}_{H_1,\tau_1}=\mathscr{D}_{H_2,\tau_2}$. By the remark above, this
%implies that  the hermitian vector bundles $E_{\tau_{\lambda}}$ and $E_{\tau_{\lambda'}}$ are isospectral with respect to the
%differential operators $\Delta_{\tau_{i}}$ ($i=1,2$).
%\end{proof}

\section{Generalization of a theorem of Larsen-Pink}\label{SS:Taylor}

A striking theorem of Larsen and Pink (\cite[Thm. 1]{Larsen-Pink}) says that the dimension datum of a connected compact
semisimple subgroup determines the isomorphism class of the subgroup. Fix a connected compact group $H$ (without assuming
semi-simplicity) and consider homomorphisms from it to a connected compact Lie group $G$. We show in the following Theorem
\ref{T:Taylor} that $\tau$-dimension data for one-dimensional representations of $H$ determine the isomorphism class of the
image of a homomorphism. This answers affirmatively a question of Professor Richard Taylor posed to the author during his
stay in IAS in 2013. % Honestly to say, the author doesn't know yet if Theorem \ref{T:Taylor} has any application.

\begin{theorem}\label{T:Taylor}
Let $G,H$ be connected compact Lie groups, and $f_1,f_2: H\rightarrow G$ be two homomorphisms. If
$$\dim((\rho\circ f_1)\otimes\chi)^{H}=\dim((\rho\circ f_2)\otimes\chi)^{H}$$ for any $\rho\in\widehat{G}$
and any $\chi\in\mathcal{X}(H)=\Hom(H,\U(1))$, then $f_1(H)\cong f_2(H)$.
\end{theorem}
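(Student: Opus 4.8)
The plan is to absorb the character twists into an ordinary dimension datum in a larger group, and then to reconstruct the image from its root datum by ruling out the known ambiguities. Put $A=H^{\mathrm{ab}}=H/[H,H]$, a torus (as $H$ is connected), with projection $\mathrm{pr}\colon H\to A$; note $\mathcal X(H)=\Hom(H,\U(1))=\widehat A$. Form $\wt G=G\times A$ and $\wt f_i=(f_i,\mathrm{pr})\colon H\to\wt G$, with image $\wt H_i=\wt f_i(H)$. Every irreducible representation of $\wt G$ is $\rho\boxtimes\chi$ ($\rho\in\hat G$, $\chi\in\widehat A$), and $(\rho\boxtimes\chi)\circ\wt f_i=(\rho\circ f_i)\otimes\chi$; hence the hypothesis is exactly $\mathscr D_{\wt H_1}=\mathscr D_{\wt H_2}$, i.e.\ $\wt H_1,\wt H_2$ have equal ordinary dimension data in $\wt G$. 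Equivalently, the pushforwards to $(\text{conjugacy classes of }G)\times A$ of Haar measure under $h\mapsto([f_i(h)],\mathrm{pr}(h))$ coincide. One checks $[\wt H_i,\wt H_i]=L_i\times\{e\}$ with $L_i:=[H_i,H_i]$, and that $p\colon\wt G\to A$ restricts to a surjection $\wt H_i\to A$ with kernel $L_i\times\{e\}$; moreover $H_i=\mathrm{pr}_G(\wt H_i)$. Thus it suffices to recover the root datum of $\wt H_i$: if $\wt H_1,\wt H_2$ turn out to be $\wt G$-conjugate, then projecting to $G$ shows $H_1,H_2$ are $G$-conjugate, hence isomorphic.

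I first extract two invariants. For $\chi\in\widehat A$ the constituents of $\rho\circ f_i$ are trivial on $\ker f_i$, so $((\rho\circ f_i)\otimes\chi)^H=0$ for all $\rho$ unless $\chi$ descends through $f_i$; and if it descends, the resulting character of $H_i$ occurs in some $\rho|_{H_i}$ by Frobenius reciprocity, making the datum nonzero for that $\rho$. Hence the hypothesis forces $\chi$ to descend through $f_1$ iff through $f_2$, i.e.\ $\im_A(\ker f_1)=\im_A(\ker f_2)$, so $H_1^{\mathrm{ab}}\cong H_2^{\mathrm{ab}}$. Next, disintegrating the equal measures over $A$ and taking the fibre at the identity gives the pushforward of Haar measure on $[H,H]$, which pairs with $\chi_\rho$ to yield $\dim\rho^{L_i}$; thus $\mathscr D_{L_1}=\mathscr D_{L_2}$, and since $L_1,L_2$ are semisimple the Larsen--Pink theorem \cite[Thm.~1]{Larsen-Pink} gives $L_1\cong L_2$ with their full root data. (Matching dimension data also force equal dimension and rank, so the central tori have equal dimension.)

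For the endgame, by Proposition \ref{P:tau-character} (the case $\tau$ trivial) applied in $\wt G$, $\wt H_1$ and $\wt H_2$ have conjugate maximal tori; after a simultaneous conjugation fix a common maximal torus $\wt S$ and write $\wt\Phi_1,\wt\Phi_2\subset X^{\ast}(\wt S)$ for their root systems, so the hypothesis reads $F_{\wt\Phi_1,0,\wt\Gamma^{\circ}}=F_{\wt\Phi_2,0,\wt\Gamma^{\circ}}$ with $\wt\Gamma^{\circ}=N_{\wt G}(\wt S)/Z_{\wt G}(\wt S)$. I would now invoke the classification of \cite{Yu-dimension}, by which two sub-root systems with equal $\wt\Gamma^{\circ}$-averaged Weyl products differ only through the explicit elementary exchanges analysed there (the decompositions $\A_{2m}\!\to\!\C_m\times\D_{m+1}$ and $\A_{2m-1}\!\to\!\B_m\times\B_m$ of Proposition \ref{P:a=cd-a=bb} being the prototypes). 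The plan is to check that every such exchange alters either the isomorphism type of the semisimple part or the abelianization (the displayed prototypes trade a factor $\U$ with central circle for a semisimple $\Sp\times\SO$), both of which are now pinned down by the previous step. Consequently no nontrivial exchange can occur, $\wt\Phi_2=\gamma\wt\Phi_1$ for some $\gamma\in\wt\Gamma^{\circ}$, and $\wt H_2$ is $\wt G$-conjugate to $\wt H_1$; projecting to $G$ gives $f_2(H)=H_2$ conjugate to $H_1=f_1(H)$.

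The heart of the matter is this last step: showing that fixing the abelianization and the semisimple part removes \emph{all} of the ambiguity in \cite{Yu-dimension}. The delicate possibility to exclude is an exchange that preserves both the semisimple part and the abelianization while altering the finite gluing $L_i\cap Z(\wt H_i)^0$ --- equivalently the character lattice $X^{\ast}(\wt S)$, equivalently $\pi_1$ of the image. I expect the real work to lie in proving that the enlarged dimension datum in $\wt G$ determines this full root datum, and not merely the root system together with the abelianization; this is exactly the information that averaging over $\wt\Gamma^{\circ}$ tends to destroy and that is responsible for the isospectral coincidences of \cite{An-Yu-Yu}.
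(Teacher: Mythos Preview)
Your reformulation via $\wt G=G\times A$ is correct and elegant: the hypothesis is indeed equivalent to $\mathscr D_{\wt H_1}=\mathscr D_{\wt H_2}$, the argument that $\chi$ descends through $f_1$ iff through $f_2$ (hence $H_1^{\mathrm{ab}}\cong H_2^{\mathrm{ab}}$) is fine, and your ``disintegration'' step is justified by Fourier inversion on $A$: the function $a\mapsto\int_{[H,H]}\chi_\rho(f_i(h'z))\,dh'$ (with $\mathrm{pr}(z)=a$) descends to $A$ and has all Fourier coefficients equal for $i=1,2$, hence equal values at $e$, giving $\mathscr D_{L_1}=\mathscr D_{L_2}$.

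However, the gap you flag in your last paragraph is exactly where the proof lives, and the paper closes it by a different mechanism than the one you propose. You want to apply the classification of \cite{Yu-dimension} to the single equality $F_{\wt\Phi_1,0,\wt\Gamma^\circ}=F_{\wt\Phi_2,0,\wt\Gamma^\circ}$ and then argue that the known exchanges are ruled out by the side constraints. The paper instead \emph{keeps the family parameter $\chi$ alive}: from $F_{\Phi_1,\chi,\Gamma^\circ}=F_{\Phi_2,\chi,\Gamma^\circ}$ for all $\chi\in X:=\mathcal X(T/T_s)$ it first matches the shortest terms to show the identification $X_1\cong X_2$ is realized by some $\gamma\in\Gamma^\circ$, so after conjugating one has a common connected center $Z$. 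Then comes the key trick you are missing: pick $\chi_0\in X$ fixed by no $\gamma\in\Gamma^\circ\setminus\Gamma'$ (where $\Gamma'=\Stab_{\Gamma^\circ}(Z)$), and take $\chi=m\chi_0$ with $m$ so large that $m\cdot\min_{\gamma\notin\Gamma'}|\gamma\chi_0-\chi_0|$ dominates $\max|\delta_{\Phi_i}-w\delta_{\Phi_i}|$. Then any coincidence $\gamma(\chi+\delta_{\Phi_1}-w_1\delta_{\Phi_1})=\chi+\delta_{\Phi_2}-w_2\delta_{\Phi_2}$ forces $\gamma\in\Gamma'$, and the equality collapses to $F_{\Phi_1,0,\Gamma'}=F_{\Phi_2,0,\Gamma'}$. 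Now $\Phi_1,\Phi_2$ are \emph{full-rank} sub-root systems of $\Psi_{T_s}$ (since $T_s$ is a maximal torus of $(H_i)_{\der}$), so the rank-changing exchanges of Proposition~\ref{P:a=cd-a=bb} that worried you are impossible, and \cite[\S7]{Yu-dimension} yields $\Phi_2=\gamma\Phi_1$ for some $\gamma\in\Aut(\Psi_{T_s})$. The gluing is then finished by showing that $\gamma$ can be adjusted by some $\gamma'\in\Gamma'$ to act trivially on $T_s\cap Z(G')$ (this uses that the irreducible factors of $\Psi_{T_s}$ of type $\BC$ contribute nothing to $T_s\cap Z(G')$), so the isomorphism $(H_1)_{\der}\to(H_2)_{\der}$ extends by the identity on $Z$.

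Two further remarks. First, the paper proves only $f_1(H)\cong f_2(H)$ as abstract groups, whereas you aim for $\wt G$-conjugacy (hence $G$-conjugacy of $H_1,H_2$); the latter is stronger than the theorem asserts and need not follow from equal dimension data. Second, your packaging into $\wt G$ does not lose information, but it also does not \emph{use} the information optimally: the point of the generic-$\chi$ trick is precisely to strip the averaging group down from $\Gamma^\circ$ to $\Gamma'$, and that reduction is what puts you in the full-rank regime where \cite{Yu-dimension} gives conjugacy rather than mere exchange.
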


\begin{proof}[Proof of Theorem \ref{T:Taylor}]

\noindent{\it The torus case.} To motivate the proof in the general case, we first show Theorem
\ref{T:Taylor} {\it in the case that $H$ is a torus.} First we show $\ker f_{1}=\ker f_2$. Suppose no.
Without loss of generality we assume that $\ker f_{1}\not\subset\ker f_2$. Then, there exists $\chi\in
\mathcal{X}(H)$ such that $\chi|_{\ker f_{1}}\neq 1$ and $\chi|_{\ker f_{2}}=1$. For any $\rho\in
\widehat{G}$, $\rho\circ f_{1}|_{\ker f_{1}}=1$, hence $\dim((\rho\circ f_1)\otimes\chi)^{H}=0$.
As $\chi|_{\ker f_{2}}=1$, $\chi$ descends to a linear character $\chi'$ of $f_{2}(H)\subset G$. Choose
some $\rho\in\widehat{G}$ such that $\rho\subset\Ind_{f_{2}(H)}^{G}(\chi'^{\ast}).$ Then, $\dim((\rho
\circ f_2)\otimes\chi)^{H}>0$. This is in contradiction with $\dim((\rho\circ f_1)\otimes\chi)^{H}=
\dim((\rho\circ f_2)\otimes\chi)^{H}$. Thus, $\ker f_{1}=\ker f_2$.

By considering $H/\ker f_{1}$ instead, we may assume that both $f_{1}$ and $f_{2}$ are injections. By considering the
support of the Sato-Tate measure of $f_{i}(H)$ (which is the push-forward to $G^{\sharp}$ (the space of $G$-conjugacy
classes in $G$) of a normalized Haar measure on $H$ under the map $f_{i}(H)\hookrightarrow G\rightarrow G^{\sharp}$),
we know that $f_{1}(H)$ and $f_{2}(H)$ are conjugate in $G$ (\cite[Prop. 3.7]{Yu-dimension}). We may assume that
$f_{1}(H)=f_{2}(H)$, and denote it by $T$. Write $\Gamma^{\circ}=N_{G}(T)/Z_{G}(T)$.

We identify $H$ with $T$ through $f_1$, and regard $f_{2}$ as an automorphism of $T$, denoted by $\phi$. Then, the
condition in the theorem is equivalent to $$F_{\emptyset,\chi,\Gamma^{0}}=F_{\emptyset,\phi^{\ast}(\chi),\Gamma^{0}}$$
by Prop. \ref{P:tau-linear}. This is also equivalent to $\phi^{\ast}(\chi)\in\Gamma^{\circ}\cdot\chi$. We show
that $\phi=\gamma|_{T}$ for some $\gamma\in\Gamma^{\circ}$. Suppose it is not the case. For any $\gamma\in\Gamma^{0}$,
due to $\phi\neq\gamma^{-1}|_{\Gamma^{0}}$, $$X_{\gamma}=\{\chi\in\mathcal{X}(H):\phi^{\ast}(\chi)=\gamma\cdot\chi\}$$
is a sublattice of $\mathcal{X}(H)$ with positive corank. Hence, $$\bigcup_{\gamma\in\Gamma^{\circ}}X_{\gamma}\neq
\mathcal{X}(H).$$ This is in contradiction with $\phi^{\ast}(\chi)\in\Gamma^{\circ}\cdot\chi$ for any $\chi\in
\mathcal{X}(H)$.

\smallskip

\noindent{\it The general case.} First we show $H_{\der}\ker f_1=H_{\der}\ker f_2$, where $H_{\der}=[H,H]$ is the derived
subgroup of $H$. Suppose no. Without loss of generality we assume that $H_{\der}\ker f_1\not\subset H_{\der}\ker f_2$.
Then, there exists $\chi\in\mathcal{X}(H)$ such that $\chi|_{H_{\der}\ker f_{1}}\neq 1$ and $\chi|_{H_{\der}\ker f_{2}}=1$.
For any $\rho\in\widehat{G}$, $\rho\circ f_{1}|_{\ker f_{1}}=1$, hence $\dim((\rho\circ f_1)\otimes\chi)^{H}=0$. As
$\chi|_{H_{\der}\ker f_{2}}=1$, $\chi$ descends to a linear character $\chi'$ of $f_{2}(H)\subset G$. Choose some $\rho\in
\widehat{G}$ such that $\rho\subset\Ind_{f_{2}(H)}^{G}(\chi'^{\ast}).$ Then, $\dim((\rho\circ f_2)\otimes\chi)^{H}>0$. This
is in contradiction with $\dim((\rho\circ f_1)\otimes\chi)^{H}=\dim((\rho\circ f_2)\otimes\chi)^{H}$. Thus,
$H_{\der}\ker f_1=H_{\der}\ker f_2.$

Write $H_{i}=f_{i}(H)$. Due to $H/H_{\der}\ker f_{i}\cong H_{i}/(H_{i})_{\der}$, we have $$H_{1}/(H_{1})_{\der}
\cong H_{2}/(H_{2})_{\der}.$$ Choose a maximal torus $T_{i}$ of $H_{i}$. Write $(T_{i})_{s}=T_{i}\cap(H_{i})_{\der}$.
Then, $(T_{i})_{s}$ is a maximal torus of $(H_{i})_{\der}$ and $T_{i}=Z(H_{i})^{0}\cdot(T_{i})_{s}.$ Due to
$T_{i}/(T_{i})_{s}\cong H_{i}/(H_{i})_{\der}$, we have $$T_{1}/(T_{1})_{s}\cong T_{2}/(T_{2})_{s}.$$ By considering
the support of Sato-Tate measures of $H_1$ and $H_{2}$, we know that $T_{1}$ and $T_{2}$ are conjugate in $G$
(\cite[Prop. 3.7]{Yu-dimension}). We may assume that $T_{1}=T_{2}$, and denote it by $T$. Write
$\Gamma^{\circ}=N_{G}(T)/Z_{G}(T)$.

Choose a biinvariant Riemannian metric on $G$, which induces a $\Gamma^{\circ}$ invariant inner product on the Lie
algebra of $T$, and also a $\Gamma^{\circ}$ invariant inner product on the weight lattice $X^{\ast}(T)$. Write
$\Phi_{i}\subset X^{\ast}(T)$ for the root system of $H_{i}$. Write $$X_{i}=\mathcal{X}(T_{i}/(T_{i})_{s})\subset
X^{\ast}(T).$$ Then, $T_{1}/(T_{1})_{s}\cong T_{2}/(T_{2})_{s}$ gives an isomorphism $\phi: X_{1}\rightarrow X_{2}$.
For any $\chi_1\in X_1$, write $\chi_2=\phi(\chi_1)$. Then, $$F_{\Phi_1,\chi_1,\Gamma^{0}}=F_{\Phi_2,\chi_2,
\Gamma^{0}}$$ by Prop. \ref{P:tau-linear}. Due to $\chi_{i}$ is orthogonal to $\delta_{\Phi_{i}}-
w\delta_{\Phi_{i}}$ for any $w\in W_{\Phi}$, $\chi_{\chi_{i},\Gamma^{\circ}}^{\ast}$ is the shortest term in the
expansion of $F_{\Phi_{i},\chi_{i},\Gamma^{0}}$. Thus, $\chi_2=\gamma\cdot\chi_{1}$ for some $\gamma\in\Gamma^{\circ}.$
Arguing similarly as in the torus case, one shows that $\phi=\gamma|_{X_{1}}$ for some $\gamma\in\Gamma^{\circ}$.
Replacing $f_{2}$ by $\Ad(g)\circ f_{2}$ for some $g\in N_{G}(T)$ if necessary, we may assume that $\phi=\id$. Then,
$X_{1}=X_{2}$ and $(T_{1})_{s}=(T_{2})_{s}$. As the Lie algebra of $Z(H_{i})^{0}$ is orthogonal to the Lie algebra of
$(T_{i})_{s}$, we have $Z(H_{1})^{0}=Z(H_{2})^{0}$. Write $Z=Z(H_{i})^{0}$, $T_{s}=(T_{i})_{s}$ and $X=X_{i}$.
Let $G'$ be the centralizer of $Z$ in $G$. Put $$\Gamma'=N_{G'}(T_{s})/Z_{G'}(T_{s}).$$ Then, $$\Gamma'=\{\gamma\in
\Gamma^{\circ}: \gamma|_{Z}=\id\}=\{\gamma\in\Gamma^{\circ}: \gamma|_{X}=\id\}.$$

If the rank $X$ has rank $0$ (i.e., $X=0$), then $H_{1}$ and $H_{2}$ are semisimple groups. By \cite[Thm. 1]{Larsen-Pink},
one has $H_{1}\cong H_{2}$. Now assume that $X$ has positive rank. For any $\gamma\in\Gamma^{\circ}-\Gamma'$, $$X_{\gamma}
:=\{\chi\in X:\gamma\cdot\chi=\chi\}$$ is a sublattice of positive corank. Thus, $\bigcup_{\gamma\in\Gamma^{\circ}-\Gamma'}
X_{\gamma}\neq X.$ Choose $$\chi_{0}\in X-\bigcup_{\gamma\in\Gamma^{\circ}-\Gamma'}X_{\gamma}.$$ Write $$c=\min\{|\gamma
\cdot\chi_{0}-\chi_{0}|:\gamma\in\Gamma^{\circ}-\Gamma'\}>0,$$ $$c'=\max\{|\delta_{\Phi_{2}}-w_{2}\delta_{\Phi_2}|+
|\delta_{\Phi_{1}}-w_{1}\delta_{\Phi_1}|: w_{1}\in W_{\Phi_1}, w_{2}\in W_{\Phi_2}\}\geq 0.$$ Take $m\geq 1$ such that
$mc>2c'$. Put $\chi=m\chi_0$. Then, for any $\gamma\in\Gamma^{\circ}$ and any $w_{j}\in W_{\Phi_{i_{j}}}$ ($i_{j}=1$ or 2), $$\gamma(\chi+\delta_{\Phi_{i_{1}}}-w_{1}\delta_{\Phi_{i_{1}}})=\chi+\delta_{\Phi_{i_{2}}}-w_{2}\delta_{\Phi_{i_{2}}}$$ if
and only if $\gamma\in\Gamma'$ and $$\gamma(\delta_{\Phi_{i_1}}-w_{1}\delta_{\Phi_{i_1}})=\delta_{\Phi_{i_2}}-
w_{2}\delta_{\Phi_{i_2}}.$$ Then, $F_{\Phi_1,\chi,\Gamma^{0}}=F_{\Phi_2,\chi,\Gamma^{0}}$ implies $F_{\Phi_1,0,\Gamma'}=
F_{\Phi_2,0,\Gamma'}.$ Define a root system $\Psi_{T_{s}}$ as in the Subsection \ref{SS:RS}. Then, $\Gamma'\subset
\Aut(\Psi_{T_{s}})$. Thus, $$F_{\Phi_1,0,\Aut(\Psi_{T_{s}})}=F_{\Phi_2,0,\Aut(\Psi_{T_{s}})}.$$ By this, results in
\cite[Section 7]{Yu-dimension} imply that $\Phi_2=\gamma\cdot\Phi_1$ for some $\gamma\in \Aut(\Psi_{T_{s}})$. This leads to
an isomorphism $\eta: (H_{1})_{\der}\rightarrow (H_{2})_{\der}$ which stabilizes $T_{s}$ and has $\eta|_{T_s}=\gamma$.
Note that $Z\cap (H_{i})_{\der}=Z\cap(T_{i})_{s}=Z\cap T_{s}\subset T_{s}\cap Z(G')$. Decompose $\Psi_{T_{s}}$ into an
orthogonal union of irreducible root systems, which gives to a decomposition of $T_{s}$. Due to the weight lattice and
the root lattice of a root system $\BC_{n}$ coincide, $T_{s}\cap Z(G')$ is contained in the product of those factors of
$T_{s}$ which correspond to reduced irreducible factors of $\Psi_{T_{s}}$. The results in \cite[Section 7]{Yu-dimension}
imply that there exists $\gamma'\in\Gamma'$ such that the action $\gamma$ on reduced irreducible factors of $\Psi_{T_{s}}$
coincides with that of $\gamma'$. Hence, $$\eta|_{T_{s}\cap Z(G')}=\gamma|_{T_{s}\cap Z(G')}=\gamma'|_{T_{s}\cap Z(G')}=
\id.$$ Then, $\eta$ extends to an isomorphism $\eta: H_1\rightarrow H_2$ by letting $\eta|_{Z}=\id$.
\end{proof}

\section{Compactness of isospectral set}

A big conjecture in spectral geometry says that any set of isospectral closed Riemannian manifolds is compact (\cite{Gordon},
\cite{Osgood-Phillips-Sarnak}). In \cite{Yu-compactness} we show a result of this favor for normal homogeneous spaces.

\begin{theorem}(\cite[Thm. 3.6]{Yu-compactness})\label{T:spectral}
Let $G$ be a compact Lie group equipped with a bi-invariant Riemannian metric $m_0$ and $H$ be a closed subgroup. Then up to
conjugacy, there are finitely many closed subgroups $H_1,\cdots,H_{k}$ of $G$ such that the normal homogeneous space
$(G/H_{j},m_0)$ is isospectral to $(G/H,m_0)$.
\end{theorem}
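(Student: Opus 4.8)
The plan is to translate the Laplace spectrum of a normal homogeneous space into representation-theoretic data for $G$, and then combine the coarse heat-trace invariants with the compactness of the space of dimension data. First I would record the spectral decomposition. Since $m_0$ is bi-invariant, the Laplacian of $(G/H,m_0)$ is induced by the Casimir element, so by Peter--Weyl $L^{2}(G/H)=\hat{\bigoplus}_{\rho\in\widehat{G}}V_{\rho}\otimes(V_{\rho}^{\ast})^{H}$, with the Laplacian acting on the $\rho$-summand by the scalar $c_{\rho}$, the Casimir eigenvalue of $\rho$; this scalar depends only on $(G,m_0)$ and the highest weight of $\rho$. Hence the eigenvalue $c_{\rho}$ occurs with multiplicity $(\dim\rho)\,\mathscr{D}_{H}(\rho)$, where $\mathscr{D}_{H}(\rho)=\dim(V_{\rho})^{H}=\int_{H}\chi_{\rho}(h)\,dh$. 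Introducing the heat kernel $K_{t}(g)=\sum_{\rho}(\dim\rho)\chi_{\rho}(g)e^{-c_{\rho}t}$ of $(G,m_0)$, the heat trace of $G/H$ becomes $Z_{G/H}(t)=\sum_{\rho}(\dim\rho)\,\mathscr{D}_{H}(\rho)e^{-c_{\rho}t}=\int_{H}K_{t}(h)\,dh$. Thus $(G/H_{j},m_0)$ is isospectral to $(G/H,m_0)$ if and only if $\int_{H_{j}}K_{t}=\int_{H}K_{t}$ for all $t>0$; equivalently, for each value $c$ one has $\sum_{c_{\rho}=c}(\dim\rho)\,\mathscr{D}_{H_{j}}(\rho)=\sum_{c_{\rho}=c}(\dim\rho)\,\mathscr{D}_{H}(\rho)$, a system of linear constraints on the dimension datum in which representations are grouped by their Casimir eigenvalue.

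Next I would extract coarse invariants from the short-time asymptotics $Z_{G/H}(t)\sim(4\pi t)^{-\dim(G/H)/2}\big(\vol(G/H)+O(t)\big)$. Isospectrality forces every $H_{j}$ to have the same dimension $\dim H$ and the same volume, so I may restrict to closed subgroups whose Lie algebras lie in the real-algebraic locus of $\Gr(\dim H,\frg)$ cut out by the subalgebra condition, on which $G$ acts by conjugation with orbits the conjugacy classes. I would then invoke the compactness of the space of dimension data from \cite{Yu-compactness}: the family $\{\mathscr{D}_{H_{j}}\}$ has compact closure in the topology of pointwise convergence on $\widehat{G}$, equivalently the Sato--Tate measures $\mu_{H_{j}}$ are relatively compact in the weak-$\ast$ topology. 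If infinitely many of the $H_{j}$ were pairwise non-conjugate, I would pass to a subsequence with $\mathscr{D}_{H_{j}}\to\mathscr{D}_{\infty}$, again a dimension datum, the grouped constraints passing to the limit.

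The crux, and the step I expect to be the main obstacle, is upgrading compactness to genuine finiteness up to conjugacy. The difficulty is real: a compact group can contain infinitely many pairwise non-conjugate subgroups of a fixed dimension (already subtori of a torus), whose dimension data may accumulate, so neither the dimension bound nor weak-$\ast$ compactness alone suffices; in the torus model it is in fact the fixed-volume constraint that isolates finitely many isospectral members. To close the argument in general I would use that dimension data are integer valued together with the algebraic rigidity supplied by the classification of dimension data in \cite{Yu-dimension}: the dimension datum of a connected subgroup is determined by the root-system-and-weight data attached to a common maximal torus, so among subgroups of fixed dimension the grouped constraints at the finitely many smallest eigenvalues $c$ pin this discrete data down up to finite ambiguity, and then integrality plus relative compactness force $\mathscr{D}_{H_{j}}$ to be eventually constant. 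Finally, a single dimension datum corresponds to only finitely many conjugacy classes of subgroups: one reduces to the identity components $H_{j}^{0}$ via the connected semisimple case of \cite[Thm. 1]{Larsen-Pink}, and then accounts for the finitely many possibilities for the component group $H_{j}/H_{j}^{0}$. This gives finitely many conjugacy classes in all.
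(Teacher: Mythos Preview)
The paper does not supply its own proof of this statement; it is quoted from \cite[Thm.~3.6]{Yu-compactness}. The argument can, however, be read off from the proof of the stronger Theorem~\ref{T:spectral2} in Section~4, and your proposal misses the decisive mechanism used there.

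Your translation of the Laplace spectrum into the grouped identities $\sum_{c_\rho=c}(\dim\rho)\,\mathscr{D}_{H_j}(\rho)=\sum_{c_\rho=c}(\dim\rho)\,\mathscr{D}_{H}(\rho)$ is correct, as is the use of the heat asymptotics to fix $\dim H_j$ and the appeal to compactness of dimension data. The gap is exactly where you flag it. The sentence ``integrality plus relative compactness force $\mathscr{D}_{H_j}$ to be eventually constant'' is not an argument: pointwise convergence of $\bbZ$-valued functions on $\widehat{G}$ gives eventual constancy at each $\rho$, but the threshold depends on $\rho$, so the global datum need not stabilize. Your invocation of \cite{Yu-dimension} for ``algebraic rigidity'' and of ``the finitely many smallest eigenvalues'' does not explain why finitely many grouped equalities pin down the whole of $\mathscr{D}_{H_j}$; and the final reduction you sketch (Larsen--Pink for $H_j^{0}$, then control $H_j/H_j^{0}$) is the content of \cite[Thm.~1.2]{An-Yu-Yu}, which you would be entitled to cite, but you never reach the hypothesis $\mathscr{D}_{H_j}=\mathscr{D}_{H}$ needed to apply it.

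What you are missing is the \emph{structural} form of the compactness theorem, \cite[Thm.~1.1]{Yu-compactness}: after passing to a subsequence and conjugating, there is a closed subgroup $H$ of $G$ with
\[
[H^{0},H^{0}]\subset H_{n}\subset H\qquad\text{and}\qquad \lim_{n\to\infty}\mathscr{D}_{H_{n}}=\mathscr{D}_{H}.
\]
This sandwich is the missing idea. Subgroups of $H$ containing $H^{0}$ are in bijection with subgroups of the finite group $H/H^{0}$, so only finitely many of the $H_{n}$ can contain $H^{0}$; discarding them, $H_{n}^{0}\subsetneq H^{0}$ and hence $\dim H_{n}<\dim H$. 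On the other hand, for the fixed metric $m_{0}$ each Casimir level $\{\rho:c_{\rho}=c\}$ is finite, so $\mathscr{D}_{H_{n}}\to\mathscr{D}_{H}$ forces the spectrum of $(G/H,m_{0})$ to coincide with the common spectrum of the $(G/H_{n},m_{0})$. The Minakshisundaram--Pleijel expansion then gives $\dim G/H=\dim G/H_{n}$, i.e.\ $\dim H=\dim H_{n}$, contradicting the strict inequality. Your outline never produces this dimension drop, and that is why it does not close.
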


Recall that in \cite[Thm 1.2]{An-Yu-Yu} we proved that the conjugacy class of a closed subgroup $H$ has only finitely
many possibility if $\mathscr{D}_{H}=\mathscr{D}_{H_0}$, which confirms an expectation of Langlands. Then, in \cite{Yu-compactness}
we proved the above Thm. \ref{T:spectral}, which is stronger than \cite[Thm 1.2]{An-Yu-Yu}. Here we prove a generalization
of Thm. \ref{T:spectral} in case $G$ is semisimple by allowing the Riemannian metric varies.

\begin{theorem}\label{T:spectral2}
Let $G$ be compact semisimple Lie group with a bi-invariant Riemannian metric $m_0$ and $H_{0}$ be a closed subgroup. Then there
are only finitely many conjugacy classes of closed subgroups $H$ of $G$ such that there exists a bi-invariant Riemannian metric
$m$ on $G$ which induces a normal homogeneous space $(G/H,m)$ isospectral to $(G/H_{0},m_{0})$.
\end{theorem}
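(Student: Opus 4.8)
The plan is to parametrize the bi-invariant metrics on the semisimple group $G$, to extract from the spectrum enough rigid invariants to trap the metric in a \emph{finite} set, and then to fall back on the fixed-metric finiteness already available in Theorem \ref{T:spectral} together with the dimension-datum finiteness of \cite{An-Yu-Yu}. Write $\frg_0=\Lie G=\frg_1\oplus\cdots\oplus\frg_r$ as a sum of simple ideals. Every bi-invariant metric $m$ corresponds to an $\Ad$-invariant inner product $\bigoplus_j c_j(-B_j)$ on $\frg_0$ with $c_j>0$, where $B_j$ is the Killing form of $\frg_j$; thus the metric cone is $\bbR_{>0}^r$. On the $\rho$-isotypic summand of $L^2(G/H,m)$ the Laplacian acts by the Casimir eigenvalue $\lambda_\rho(m)=\sum_{j}c_j^{-1}\kappa_j(\rho)$, where $\kappa_j(\rho)\ge 0$ is the Casimir eigenvalue of $\rho$ on $\frg_j$ computed with the fixed form $-B_j$; these numbers are intrinsic to $G$ and independent of $H$ and $m$. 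Hence the spectrum of $(G/H,m)$ is the exponential sum
\[\Theta_{(G/H,m)}(t)=\sum_{\rho\in\widehat{G}}\dim\rho\cdot\dim\rho^{H}\cdot e^{-t\lambda_\rho(m)},\]
while the fixed target $(G/H_0,m_0)$ contributes $\Theta_0(t)=\sum_k M_k e^{-\mu_k t}$ with a fixed discrete frequency set $0=\mu_0<\mu_1<\mu_2<\cdots$ and fixed multiplicities $M_k$.

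First I would read coarse invariants from the small-time asymptotics $\Theta_{(G/H,m)}(t)\sim(4\pi t)^{-n/2}\vol(G/H,m)\,(1+O(t))$: isospectrality forces $n=\dim G-\dim H=\dim G-\dim H_0$, so $\dim H=\dim H_0$, and it fixes $\vol(G/H,m)=\vol(G/H_0,m_0)$. A simple factor $\frg_j$ contained in $\frh=\Lie H$ is orthogonal to $\frmm=\frh^{\perp}$ and so invisible to $G/H$; I discard such factors, noting that the set of discarded factors takes only finitely many values and may be fixed along the argument. For the surviving factors the volume equals $\sqrt{\det(Q|_{\frmm})}$ up to a universal constant, a form that blows up as any surviving $c_j\to\infty$, giving a uniform upper bound $c_j\le C$ over all candidates.

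The crucial and most delicate step is the uniform lower bound $c_j\ge\varepsilon>0$, i.e. ruling out metric collapse on a surviving factor. Here the volume is not by itself coercive, since a diagonally embedded $\frh$ can keep $\det(Q|_{\frmm})$ bounded away from $0$ as $c_j\to 0$, so I would argue through the bottom of the spectrum: a representation nontrivial on $\frg_j$ has $\kappa_j(\rho)$ bounded below by a positive constant, so if $c_j\to 0$ every such $\rho$ is pushed to arbitrarily large eigenvalue, whereas matching the fixed finite list $\mu_1,\dots,\mu_K$ of low target eigenvalues with their exact multiplicities forces a controlled supply of low eigenvalues involving factor $j$. A quantitative version of this incompatibility, combined with the fixed volume, is what I expect to yield $c_j\ge\varepsilon$. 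Granting the two-sided bound, all candidate metrics lie in a compact $K\subset\bbR_{>0}^r$, so $(c_j^{-1})$ is bounded; then any $\rho$ with $\lambda_\rho(m)\le\mu_K$ has $\sum_j\kappa_j(\rho)$ bounded, hence belongs to one fixed finite set $\mathcal{R}$ of representations of $G$, independent of the candidate.

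With $\mathcal{R}$ finite I would promote boundedness to finiteness of the metric. For each candidate, among the representations in $\mathcal{R}$ that occur ($\dim\rho^{H}>0$) there are, after discarding the factors inside $H$, enough with $\bbR$-linearly independent Casimir vectors $(\kappa_1(\rho),\dots,\kappa_r(\rho))$ to span, since otherwise some surviving factor would act trivially on all of $L^2(G/H)$. The exact constraints $\sum_j c_j^{-1}\kappa_j(\rho)\in\{\mu_0,\dots,\mu_K\}$ for these finitely many $\rho$ then determine $(c_j^{-1})$ as the solution of an invertible linear system whose matrix is chosen from finitely many subsets of $\mathcal{R}$ and whose right-hand side is a tuple of the finitely many target eigenvalues; hence $m$ takes only finitely many values. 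Finally, for each such metric the problem is exactly the fixed-metric situation, where isospectrality becomes $\sum_{\lambda_\rho(m)=\mu}\dim\rho\cdot\dim\rho^{H}=M(\mu)$, and the method of Theorem \ref{T:spectral} together with the finiteness of closed subgroups with a prescribed dimension datum (\cite[Thm.\ 1.2]{An-Yu-Yu}, refined in \cite{Yu-compactness}) leaves only finitely many conjugacy classes of $H$; the union over the finitely many metrics gives the theorem. I expect the genuine difficulty to sit entirely in the lower metric bound of the third paragraph --- quantifying how a collapsing factor must distort the prescribed low-lying spectrum --- while the passage from a bounded to a finite set of metrics and the final reduction are comparatively formal.
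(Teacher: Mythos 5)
Your strategy --- first trap the bi-invariant metric in a compact subset of $\bbR_{>0}^{r}$, then cut it down to a finite set, then quote the fixed-metric Theorem \ref{T:spectral} --- is genuinely different from the paper's, which never bounds the metric at all: the paper runs a sequential compactness argument, lets the coefficients $a_i^{(n)}$ converge in $[0,\infty]$, uses the compactness theorem for dimension data (\cite[Thm. 1.1]{Yu-compactness}) to produce a limiting subgroup $H$ with $[H^{0},H^{0}]\subset H_{n}\subset H$ and $\dim H_{n}<\dim H$, identifies the surviving low-lying spectrum with that of a smaller normal homogeneous space $(G'/H',m')$, and gets a contradiction from the dimension term of the Minakshisundaram--Pleijel expansion. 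The difference matters, because two of your three metric-control steps fail in exactly the configuration the paper is forced to analyse: a simple factor diagonally absorbed into $H$.

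Concretely, take $G=K\times K$ with $K$ simple, $H=\Delta K$, and $Q=c_{1}(-B)\oplus c_{2}(-B)$. Then $\vol(G/H,m)\propto\sqrt{\det Q/\det(Q|_{\frh})}=\bigl(c_{1}c_{2}/(c_{1}+c_{2})\bigr)^{\dim K/2}\sqrt{\det(-B)}$, which stays bounded as $c_{1}\to\infty$; indeed the induced normal metric on $G/H\cong K$ is $\tfrac{c_{1}c_{2}}{c_{1}+c_{2}}(-B)$, so an entire one-parameter family of bi-invariant metrics on $G$ yields isometric quotients. Both factors ``survive'' in your sense ($\frg_j\not\subset\frh$), yet the volume gives no upper bound $c_{j}\le C$: your step (volume blows up as a surviving $c_j\to\infty$) is false. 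The same example defeats your finiteness-of-metrics step: every $\rho=\rho_{1}\otimes\rho_{2}$ occurring in $L^{2}(G/\Delta K)$ has $\rho_{2}\cong\rho_{1}^{\ast}$, hence $\kappa_{1}(\rho)=\kappa_{2}(\rho)$, so all Casimir vectors lie in a hyperplane, only $c_{1}^{-1}+c_{2}^{-1}$ is pinned down, and no factor acts trivially on $L^{2}(G/H)$ --- contradicting the implication you invoke to get a spanning set. Finally, the lower bound $c_{j}\ge\varepsilon$, which you yourself identify as the crux, is only asserted as something you ``expect''; no argument is given, and the paper does not establish (or need) such a bound --- it permits the collapse and handles it in the limit. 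As written, the proposal therefore has a broken step (the upper bound), an unproved step (the lower bound), and a false step (finiteness of admissible metrics); repairing them would force you to confront the diagonal structure $H=(H\cap G^{(2)})\times\{(\phi(x),x)\}$ explicitly, which is essentially where the paper's proof begins.
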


\begin{proof}
First we may assume that $G$ is connected and simply connected. Write $G=G_1\times\cdots G_{s}$ for the decomposition
of $G$ into simple factors. For each $i$, choose a bi-invariant Riemannian metric $m_{0,i}$ on $G_{i}$. By normalization
we may assume that the Laplace operator and the Casimir operator coincide on $(C^{\infty}(G_{i}),m_{0,i})$ ($1\leq i\leq s$).

Suppose that $\{(G/H_{n},m_{n}): n\geq 1\}$ is a sequence of normal homogeneous spaces such that the Laplace spectrum
of each $(G/H_{n},m_{n})$ is equal to that of $(G/H_{0},m_{0})$, and $H_{n}$ ($n\geq 1$) are non-conjugate to each other.
Write $$m_{n}=\bigoplus_{1\leq i\leq s}a^{(n)}_{i}m_{0,i}.$$ By \cite[Thm. 1.1]{Yu-compactness}, there exists a closed
subgroup $H$ of $G$, a subsequence $\{H_{n_{j}}: j\geq 1\}$ and a sequence $\{g_{j}:j\geq 1, g_{j}\in G\}$ such that
for all $j\in\mathbb{N}$, \[[H^{0},H^{0}]\subset g_{j}H_{n_{j}}g_{j}^{-1}\subset H,\] and \[\lim_{j\rightarrow\infty}
\mathscr{D}_{H_{n_{j}}}=\mathscr{D}_{H}.\] Substituting $\{(G/H_{n},m_{n}): n\geq 1\}$ by a subsequence if necessary we
may assume that: for any $n\geq 1$, \[[H^{0},H^{0}]\subset H_{n}\subset H,\] and \[\lim_{j\rightarrow\infty}
\mathscr{D}_{H_{n}}=\mathscr{D}_{H}.\] Since $H_{n}$ are assumed to be non-conjugate to each other, at most finitely many
of them contain $H^{0}$. By removing such exceptions, we may assume that $\dim H_{n}<\dim H$ for all $n$.

We may also assume that each sequence $\{a_{i}^{(n)}: n\geq 1\}$ converges. Write $$a_{i}=\lim_{n\rightarrow\infty}a_{i}^{(n)}
\in[0,\infty].$$ Without loss of generality we assume that $$a_1=\cdots=a_{u}=0,$$ $$0<a_{u+1},\dots,a_{v}<\infty,$$
$$a_{v+1}=\cdots=a_{s}=\infty,$$ where $0\leq u\leq v\leq s$. Write $$G^{(1)}=\prod_{1\leq i\leq u} G_{i},\quad G^{(2)}=
\prod_{1\leq i\leq v} G_{i},\quad G^{(3)}=\prod_{v+1\leq i\leq s} G_{i},$$ $$G'=\prod_{u+1\leq i\leq v} G_{i},\quad H'=
G'\cap(HG^{(1)}),\quad m'=\bigoplus_{u+1\leq i\leq v}a_{i}m_{0,i}.$$

%\begin{claim}\label{Cl}
%We have $$\prod_{v+1\leq i\leq s} G_{i}\subset H\prod_{1\leq i\leq v} G_{i},$$ and the Laplace
%spectrum of $(G'/H',m')$ equals to the given spectrum.
%end{claim}

% Then, $G=HG^{(2)}$ by Claim \ref{Cl}. Thus, $$\dim G/H_{n}\!>\!\dim G/H\!=\!
%\dim G^{(2)}/G^{(2)}\cap H\!\geq\!\dim G^{(2)}/G^{(2)}\cap HG^{(1)}\!=\!\dim G'/H'.$$ By Claim \ref{Cl} again,
%$(G'/H',m')$ and $(G/H_{n},m_{n})$ are isospectral. As Laplace spectrum determines dimension (\cite{Gordon}), we
%get a contradiction.
%\end{proof}

%\begin{proof}[Proof of Claim \ref{Cl}]

Write $\chi_{i}(\rho)$ ($1\leq i\leq s$) for the value of the Casimir operator acting on matrix coefficients of
$\rho\in\widehat{G_{i}}$. We know that: $\chi_{i}(\rho)\geq 0$, and $\chi_{i}(\rho)=0$ if and only if $\rho=1$.
We first show that $G^{(3)}\subset HG^{(2)}$. Suppose no. Then, there exists a nontrivial irreducible representation
$$\rho=\bigotimes_{v+1\leq i\leq s}\rho_{i}$$ of $\G^{(3)}$ such that $V_{\rho}^{G^{(3)}\cap HG^{(2)}}\neq 0$. Take
$0\neq v\in V_{\rho}^{G^{(3)}\cap HG^{(2)}}$ and $0\neq\alpha\in V_{\rho}^{\ast}$. Set $$f_{v,\alpha}(g_1,\dots,g_{s})
=\alpha((g_{v+1},\dots,g_{s})\cdot v).$$ Then, $f_{v,\alpha} \in C^{\infty}(G/H)\subset C^{\infty}(G/H_{n})$ for any
$n\geq 1$. The Laplace eigenvalue for $f_{v,\alpha}\in(C^{\infty}(G/H_{n}),m_{n})$ is equal to $$\sum_{v+1\leq i\leq s}
\frac{1}{a_{i}^{(n)}}\chi_{i}(\rho_{i})>0.$$ When $n\rightarrow\infty$, this value tends to $0$. This is in
contradiction with the fact that the Laplace spectrum of each $G/H_{n}$ is equal to a given spectrum which is a
discrete set in $\mathbb{R}_{\geq 0}$.

Now we assume $G^{(3)}\subset HG^{(2)}.$ Then,  $H$ is of the form $$H=(H\cap G^{(2)})\times\{(\phi(x),x):x\in
G^{(3)}\}$$ for some homomorphism $\phi: G^{(3)}\rightarrow G^{(2)}$. Put $$G^{(4)}=\{(\phi(x),x): x\in G^{(3)}\}.$$
Let $G^{(5)}$ be the centralizer of $G^{(4)}$ in $G$. Then, $G^{(5)}\subset G^{(2)}$. Due to $[H^{0},H^{0}]\subset
H_{n}$ for any $n\geq 1$, each $H_{n}$ is of the form $$H_{n}=(H_{n}\cap G^{(2)})\times G^{(4)}.$$ Applying
\cite[Thm. 1.1]{Yu-compactness} to the subgroups $H_{n}\cap G^{(2)}$ of $H\cap G^{(2)}$, we find a subgroup $\tilde{H}$
of $H\cap G^{(2)}$ such that $\lim_{n\rightarrow\infty}\mathscr{D}_{H_{n}\cap G^{(2)}}=\mathscr{D}_{\tilde{H}}$ as
dimension data of subgroups of $H\cap G^{(2)}$. Put $H'=\tilde{H}\times G^{(4)}$. Then, $\lim_{n\rightarrow\infty}
\mathscr{D}_{H_{n}}=\mathscr{D}_{H'}$. Thus, $H'\subset H$ and $\mathscr{D}_{H'}=\mathscr{D}_{H}$. By
\cite[Lemma 2.3]{An-Yu-Yu}, we have $H'=H$. Hence, $\tilde{H}=H\cap G^{(2)}$. Therefore, \[\lim_{n\rightarrow\infty}
\mathscr{D}_{H_{n}\cap G^{(2)}}=\mathscr{D}_{H\cap G^{(2)}}\] as dimension data of subgroups of $G^{(5)}$.

Let $c$ be a positive real number. Suppose matrix coefficients of $$\rho=\bigotimes_{1\leq i\leq s}\rho_{i}$$ contribute to
th Laplace spectrum of $(G/H_{n},m_{n})$ in the eigenvalue scope $[0,c]$. Then, $$\sum_{1\leq i\leq s}\frac{1}{a_{i}^{(n)}}
\chi_{i}(\rho_{i})\leq c$$ and $\rho^{G^{(4)}}\neq 0$. Due to $a_{i}^{(n)}\rightarrow a_{i}$, we have: when $n$ is sufficiently
large, each $\rho_{i}=1$ ($1\leq i\leq u$) and each $\rho_{i}$ ($u+1\leq i\leq s$) lies in a finite set. Due to $\rho^{G^{(4)}}
\neq 0$, $\bigotimes_{v+1\leq i\leq s}\rho_{i}$ is determined by $\bigotimes_{1\leq i\leq v}\rho_{i}$ up to finitely many
possibilities. Then, there are only finitely many $\rho$ in consideration. For each of such $\rho$, we that
\[\lim_{n\rightarrow\infty}\dim V_{\rho}^{H_{n}}=\dim V_{\rho}^{H}=\dim V_{\rho}^{HG^{(1)}}\] for the invariant dimensions,
and \[\lim_{n\rightarrow\infty}\sum_{1\leq i\leq s}\frac{1}{a_{i}^{(n)}}\chi_{i}(\rho_{i})=\sum_{1\leq i\leq s}
\frac{1}{a_{i}}\chi_{i}(\rho_{i})\] for the eigenvalues. Note that \[G/HG^{(1)}\cong G^{(2)}/G^{(2)}\cap HG^{(1)}\cong
G'/G'\cap HG^{(1)}=G'/H'.\] These together imply that: the Laplace spectrum of $(G'/H',m')$ is larger than the Laplace spectrum
of $(G/H_{0},m_{0})$. On the other hand, if matrix coefficients of $$\rho=\bigotimes_{1\leq i\leq s}\rho_{i}$$ contribute to
the Laplace spectrum of $G/HG^{(1)}\cong G'/H'$ in the eigenvalue scope $[0,c]$, then we have the same statements for
$\{\rho_{i}:1\leq i\leq s\}$ as above. By the stabilization of invariant dimensions and the convergence of eigenvalues, it
follows that the Laplace spectrum of $(G'/H',m')$ is smaller than the Laplace spectrum of $(G/H_{0},m_{0})$. Therefore, the
Laplace spectrum of $(G'/H',m')$ is equal to the Laplace spectrum of $(G/H_{0},m_{0})$. By the Minakshisundaram-Pleijel
asymptotic expansion formula, Laplace spectrum determines the dimension (cf. \cite[Subsection 1.1]{Gordon}). Then, $\dim G/H_{n}=
\dim G/H_{0}=\dim G/H$ for any $n\geq 1$. Hence, $\dim H_{n}=\dim H$, which is in contradiction with $\dim H_{n}<\dim H$.
\end{proof}

Motivated by the compactness conjecture of isospectral sets, we think the following statement should hold.

\begin{conjecture}\label{C:finiteness-normal}
There exist only finitely many normal homogeneous spaces $(G/H,m)$ up to isometry with Laplace spectrum equal to a given spectrum.
\end{conjecture}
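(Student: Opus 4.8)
The plan is to prove the conjecture in three stages: first extract the dimension $d=\dim(G/H)$ from the spectrum, then bound the acting group $G$ through the geometry of the isometry group, and finally invoke a fixed-$G$ finiteness statement in the spirit of Theorem \ref{T:spectral2}. Throughout I regard a normal homogeneous space as a triple $(G,H,m)$ with $G$ a connected compact Lie group (if $G/H$ is connected we may replace $G$ by $G^{0}$, since then $G^{0}H=G$), $H$ a closed subgroup, and $m$ induced from a bi-invariant metric on $G$; replacing $G$ by its quotient by the largest normal subgroup contained in $H$, I may assume the action is almost effective, and one checks the metric stays normal homogeneous under this reduction. The first step is then to apply the Minakshisundaram–Pleijel asymptotic expansion (cf. \cite[Subsection 1.1]{Gordon}): the leading heat coefficient determines $d=\dim(G/H)$ and the volume, so $d$ is common to all spaces sharing the given spectrum.

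Next comes the key geometric input. The isometry group of a compact Riemannian $d$-manifold is a compact Lie group of dimension at most $\binom{d+1}{2}$ (equality for the round sphere, by the Myers–Steenrod bound via the orthonormal frame bundle). Since $G$ acts almost effectively by isometries of $(G/H,m)$, it embeds into $\operatorname{Isom}(G/H,m)$, whence $\dim G\leq\binom{d+1}{2}=:N_{0}$. There are only finitely many connected compact Lie groups of dimension $\leq N_{0}$ up to isomorphism — finitely many semisimple types of bounded dimension, a central torus of bounded dimension, and finitely many ways to assemble these — so the acting group $G$ ranges over a finite list, and it suffices to treat each $G$ separately.

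Finally, for each such $G$ I would show that only finitely many pairs $(H,m)$ realize the prescribed spectrum; since finitely many presentations bound the number of isometry classes, this closes the argument. When $G$ is semisimple this is almost exactly Theorem \ref{T:spectral2}, which yields finitely many conjugacy classes of $H$; for each such $H$ the eigenvalues have the form $\sum_{i}a_{i}^{-1}\chi_{i}(\rho_{i})$, so matching finitely many low-lying eigenvalues and their multiplicities against the prescribed spectrum pins down the metric parameters $a_{i}$ up to finitely many solutions. The remaining and hardest case is when $G$ carries a central torus: the torus directions contribute a flat-torus factor, and excluding a continuum of isospectral metrics there requires the finiteness of isospectral flat tori in a fixed dimension — a number-theoretic fact about theta series rather than the representation-theoretic machinery of Section 2. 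I expect this torus and mixed case, together with the verification that the normal metric admits no nontrivial continuous isospectral deformation, to be the main obstacle; the semisimple skeleton is controlled by Theorem \ref{T:spectral2}, but combining it cleanly with the abelian part is where the real work lies.
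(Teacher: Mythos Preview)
The statement you are attempting to prove is posed in the paper as a \emph{conjecture}, not a theorem; the paper does not claim a proof. What follows the conjecture in the paper is a discussion of partial progress and of the obstacles, and your outline tracks that discussion almost exactly: extract $\dim G/H$ from the heat expansion, bound $\dim G$ by the isometry-group bound to get finitely many $G$, then for semisimple $G$ invoke Theorem~\ref{T:spectral2} to get finitely many conjugacy classes of $H$. The paper makes precisely these reductions.

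The genuine gap in your proposal is the sentence ``matching finitely many low-lying eigenvalues and their multiplicities against the prescribed spectrum pins down the metric parameters $a_{i}$ up to finitely many solutions.'' This is not established; in the paper it is isolated as the open Question~\ref{Q:finiteness-normal2}, with the remark that an affirmative answer is known only when $G/H$ is a compact symmetric space (\cite{Gordon-Sutton}). The difficulty is that the multiplicity of an eigenvalue $\lambda$ under the metric $m=\bigoplus a_i m_{0,i}$ is $\sum_{\rho:\ \sum a_i^{-1}\chi_i(\rho_i)=\lambda}\dim V_\rho^{H}\cdot\dim V_\rho$, and nothing a priori prevents different choices of the $a_i$ from producing the same collisions and hence the same spectrum with multiplicities. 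Your argument that finitely many equations in the $a_i^{-1}$ have finitely many solutions presumes you already know \emph{which} representations contribute to each low eigenvalue, but that assignment itself varies with the $a_i$.

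You correctly flag the central-torus case as the hardest part, and the paper agrees: it notes that the pure torus case is handled by Kneser's theorem (via Mahler compactness, cf.\ \cite{Wolpert}), but that coupling the torus directions with the semisimple part is where ``the main difficulty'' lies. So your proposal is an accurate diagnosis of the structure of the problem, but it is not a proof: both the metric-finiteness step in the semisimple case and the mixed torus/semisimple case remain open, exactly as the paper states.
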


Recall that for a fixed pair $H\subset G$, different metrics $m$ on $G$ may induce the same metric on $G/H$. When $G$ and
$m$ are both given, Conjecture \ref{C:finiteness-normal} is confirmed affirmatively by Thm. \ref{T:spectral}.
Any normal homogeneous space is of the form $M=G/H$, where $$G=T\prod_{1\leq i\leq s} G_{i}$$ with $T$ a torus and each $G_{i}$
($1\leq i\leq s$) a connected and simply-connected compact simple Lie group, $H\cap T=1$, and $G_{i}\not\subset H$ for any $i$.
Let $M=G/H$ be of this form. When $G$ is semisimple, as $\dim G/H$ is determined by the Laplace spectrum one shows that there
are only finitely many possible $G$. For a fixed $G$, there are only finitely many possible $G/H$ by Theorem \ref{T:spectral2}.
In this case Conjecture \ref{C:finiteness-normal} reduces to the the following question, which has an affirmative answer in
case $G/H$ is a compact symmetric space (cf. \cite{Gordon-Sutton}).

\begin{question}\label{Q:finiteness-normal2}
Let $G$ be a compact semisimple Lie group, and $H$ be a closed subgroup with the above constraint. Are there only finitely many
normal homogeneous spaces $(G/H,m)$ up to isometry with Laplace spectrum equal to a given spectrum?
\end{question}

When $G$ is a torus, then $H=1$ by the above constraint. In this case Conjecture \ref{C:finiteness-normal} is implied by a
theorem of Kneser. A simple proof is given in \cite{Wolpert}, which is based on the Mahler compactness theorem for lattices.

In general, we still have finiteness for $G$ by dimension reason. In this case, the main difficulty is due to the complication
of the invariant inner product on the toric part of the Lie algebra of $G$. Perhaps a sophisticated use of Mahler compactness
theorem coupled with compactness result for dimension datum (\cite[Thm. 1.1]{Yu-compactness}) could overcome this difficulty.

\end{document}